\numberwithin{equation}{section}
\newtheorem{assumption}{Assumption}[section]
\newtheorem{algorithm}{Algorithm}[section]
\newtheorem{remark}{Remark}[section]
\def\lam{{\lambda}}
\def\Ome{{\Omega}}
\def\nab{{\nabla}}
\def\vepsi{{\varepsilon}}
\def\p{{\partial}}
\def\reff#1{\eqref{#1}}
\def\norm#1#2{\Vert\,#1\,\Vert_{#2}}
\def\vepsi{\varepsilon}
\def\cT{{\mathcal T}}
\def\no{{\nonumber}}
\def\div{{\mbox{\rm div\,}}}
\def\p{{\partial}}
\def\nab{\nabla}
\def\Ome{\Omega}
\def\lam{\lambda}
\newcommand{\bRM}{\mathbf{RM}}
\newcommand{\br}{\mathbf{r}}
\def\ba{\mathbf{a}}
\def\bb{\mathbf{b}}
\def\bC{\mathbf{C}}
\def\bbf{\mathbf{f}}
\def\bu{\mathbf{u}}
\def\bv{\mathbf{v}}
\def\bw{\mathbf{w}}
\def\bg{\mathbf{g}}
\def\bn{\mathbf{n}}
\def\bH{\mathbf{H}}
\def\bV{\mathbf{V}}
\def\bL{\mathbf{L}}
\def\bP{\mathbf{P}}
\def\bV{\mathbf{V}}
\def\bX{\mathbf{X}}
\def\R{\mathbb{R}}
\begin{document}
	
	
	\title{Optimal error estimates of multiphysics finite element method for a nonlinear poroelasticity model with nonlinear stress-strain relations
		\footnote{Last update: \today}}
	
	\author{
		Zhihao Ge\thanks{School of Mathematics and Statistics, Henan University, Kaifeng 475004, P.R. China ({\tt Email:zhihaoge@henu.edu.cn}).
			The work of this author was supported by the National Natural Science Foundation of China under grant No.11971150.}
		\and
		Hairun Li\thanks{School of Mathematics and Statistics, Henan University, Kaifeng 475004, P.R. China.}
       \and
		Tingting Li\thanks{School of Mathematics and Statistics, Henan University, Kaifeng 475004, P.R. China ({\tt Email:ltt1120@mail.ustc.edu.cn}).
			The work of this author was supported by the National Natural Science Foundation of China under grant  No. 11801143.}
		%
	}
	
	\maketitle
	
	
	\setcounter{page}{1}
	
	
	
	\begin{abstract}
		In this paper, we study the numerical algorithm for a nonlinear poroelasticity model with nonlinear stress-strain relations. By using variable substitution, the original problem can be reformulated to a new coupled fluid-fluid system, that is, a generalized nonlinear Stokes problem of displacement vector field related to pseudo pressure and a diffusion problem of other pseudo pressure fields. A new technique is used to get the existence and uniqueness of the solution of the reformulated model and a fully discrete nonlinear finite element method is proposed to solve the model numerically. The multiphysics finite element is used to get the discretization of the space variable and the backward Euler method is taken as the time-stepping method in the fully discrete case. Stability analysis and the error estimation are given for the fully discrete case and numerical test are taken to verify the theoretical results.
	\end{abstract}
	
	\begin{keywords}
		Nonlinear poroelasticity model, Multiphysics finite element method, Backward Euler method.
	\end{keywords}
	
	\begin{AMS}
		65M12, 
		65M15, 
		65N30. 
	\end{AMS}
	
	\pagestyle{myheadings}
	\thispagestyle{plain}
	\markboth{ZHIHAO GE, HAIRUN LI, TINGTING LI}{OPTIMAL ERROR ESTIMATES OF MFEM FOR NONLINEAR POROELASTICITY}
	

\section{Introduction}\label{sec-1}

Poromechanic is a fluid-solid interaction system at pore scale and is a branch of continuum mechanics and acoustics. If the solid is an elastic material, then the subject of the study is known as poroelasticity. Moreover, the elastic material may be governed by linear or nonlinear constitutive law, which then leads respectively to linear and nonlinear poroelasticity. The quasi-static poroelasticity model to be studied in this paper is given as  (cf. \cite{pw07,20210819,X.B.Feng2014}):
\begin{alignat}{2} \label{e1.1}
	-\div \sigma(\bu) + \alpha \nab p &= \bbf
	&&\qquad \mbox{in } \Ome_T:=\Ome\times (0,T)\subset \mathbf{\R}^d\times (0,T),\\
	(c_0p+\alpha \div \bu)_t + \div \bv_f &=\phi &&\qquad \mbox{in } \Ome_T,
	\label{e1.2}
\end{alignat}
where
\begin{align} \label{e1.3}
	\bv_f &:= -\frac{K}{\mu_f} \bigl(\nab p -\rho_f \bg \bigr).
\end{align}
Here $\Omega \subset \R^d \,(d=1,2,3)$ denotes a bounded polygonal domain with the boundary
$\p\Ome$. And  $\bu$ denotes the displacement vector of the solid and
$p$ denotes the pressure of the solvent. $\bbf$ is the body force, $\bg$ is the gravitational acceleration. $I$ denotes the $d\times d$ identity matrix. The  permeability tensor $K=K(x)$  is assumed to be symmetric and uniformly positive definite in the sense that there exists positive constants $K_1$ and $K_2$ such that $K_1|\zeta|^2\leq K(x)\zeta\cdot \zeta \leq K_2 |\zeta|^2$ for a.e. $x\in\Omega$ and any $\zeta\in \mathbf{\R}^d$. The solvent viscosity $\mu_f$, Biot-Willis constant $\alpha$, and the constrained specific storage coefficient $c_0$ are all positive.  In addition, $\sigma(\bu)$ is called the nonlinear (effective) stress tensor,
$\widehat{\sigma}(\bu, p):=\sigma(\bu)-\alpha p I$ is the total stress tensor.  $\bv_f$ is the volumetric solvent flux and \reff{e1.3} is the well-known Darcy's law, and $\rho_f$ is a positive constant. 

To close the system  (\ref{e1.1})-(\ref{e1.2}), the following set of boundary and initial conditions will be considered in this paper:
\begin{alignat}{2} \label{e1.4a}
	\widehat{\sigma}(\bu,p)\bn=\sigma(\bu)\bn-\alpha p \bn &= \bbf_1
	&&\qquad \mbox{on } \p\Ome_T:=\p\Ome\times (0,T),\\
	\bv_f\cdot\bn= -\frac{K}{\mu_f} \bigl(\nab p -\rho_f \bg \bigr)\cdot \bn
	&=\phi_1 &&\qquad \mbox{on } \p\Ome_T, \label{e1.4b} \\
	\bu=\bu_0,\qquad p&=p_0 &&\qquad \mbox{in } \Ome\times\{t=0\}. \label{e1.4c}
\end{alignat}

We remark that poroelasticity model (\ref{e1.1})-(\ref{e1.2}) has been widely applied in many fields such as material science, biomechanics and so on, for the details, one can see \cite{T.Roose2003,C.C.Swan2003,20210819} and the references therein.  Especially, the nonlinear poroelasticity model has been used to simulate the activity process of some biological tissues, for example, nonlinear poroelasticity model can be used to study soft tissue of orbit and mechanical mechanism of lung and other organs in human body, one can refer to \cite{borgerlung,luboz} and the references therein. Due to the complexity of nonlinear poroelasticity, there are few of theoretical results, as for the numerical results for the nonlinear poroelasticity problem, Levenston and Frank proposed a three-field (displacement, fluid flux, pressure) formula in \cite{LevenstonM}, Chapelle and Gerbeau et al. proposed the operator splitting iteration method in \cite{ChapelleD}, Berger, Bordas and Kay developed a stabilized low-order three-field mixed finite element method in \cite{BergerL} for the fully saturated incompressible small deformation case for which a linearly elastic model is sufficient. The low-order finite element method is relatively easy to implement and allows effective preconditioning by \cite{FerronatoM, HughesTJ, WhiteJA}. Vuong, Yoshihara and Wall use a three-field finite element method which is continuous in pressure in \cite{VuongAT}. Berger, Bordas, Kay and Tavener construct a stabilized finite-element method to compute flow and finite-strain deformations in an incompressible poroelastic medium in \cite{borger}, where the authors employ a three-field mixed formulation to calculate displacement, fluid flux and pressure directly and introduce a Lagrange multiplier to enforce
flux boundary conditions. The authors of \cite{pw07,murad}  use the finite element method to solve the poroelasticity model directly and it will appear locking phenomenon. Recently, a HHO method is proposed for nonlinear poroelasticity and elasticity in \cite{BOFFI2019} and \cite{DANIELEBOFFI2016}. As for the permeability tensor $K(\div \bu)$ of nonlinear poroelasticity, one can refer to  \cite{20210921,20210922,20210923,20210924,20210925,20210927,20210928} and the references therein. 
 At the same time, another main difficulty is how to decouple the coupled system and how to deal with the nonlinear terms in computation. In this work, we follow the method used in \cite{X.B.Feng2014} to get the numerical algorithm. By introducing a new variable $q={\rm \div}\mathbf{u}$ and a new denotation $\mathcal{N}(\varepsilon(\mathbf{u}))=\sigma(\mathbf{u})-\lambda \mathrm{div}\mathbf{u}\textit{I}$, where Lam\'e constant $\lam$ is computed from the  Young's modulus $E$ and the  Poisson ratio $\nu$ by $\lam=\frac{E\nu}{(1+\nu)(1-2\nu)}$, we reformulate the nonlinear poroelasticity problem into a generalized nonlinear Stokes problem for the displacement vector field and pseudo pressure field and a diffusion problem for other pseudo pressure fields. To the best of our knowledge, there are few existing references to use the fully discrete multiphysics finite element method to study the nonlinear poroelasticity. In this work, the multiphysics nonlinear finite element method is used to solve the space variables, and the Newton iteration method is used to solve the nonlinear equations, and the backward Euler method is used to get the time discretization. The main novelty is that the multiphysics nonlinear finite element method is used and the error estimate is made for the numerical method.

This paper is organized as follows. In Section \ref{sec-2}, we introduce the reformulated multiphysics model for the nonlinear poroelasticity and give the PDE analysis. In Section \ref{sec-3}, we propose a fully discrete multiphysics nonlinear finite element method for the poroelasticity model, and give the stability analysis and convergence analysis of the multiphysics nonlinear finite element method. In Section \ref{sec-4}, we present the numerical results to verify the efficiency of the proposed numerical methods. Finally, we draw a conclusion to summary the main works in this paper.

\section{Multiphysics reformulation and its PDE analysis}\label{sec-2}
\subsection{Multiphysics reformulation} \label{sec-2.2}
To reveal the multi-physics progresses and to propose absolutely stable and high order numerical method, we firstly derive a multiphysics reformulation for the original model, and then make numerical approximation for the reformulated model. This is a key idea of this work and it will be seen in the later sections that this new approach is advantageous. To the end, we introduce new variable $q:=\div \bu$, and denote
\[\mathcal{N}(\varepsilon(\mathbf{u})):~=\sigma(\mathbf{u})-\lambda q\textit{I}, \quad\eta:=c_0p+\alpha q,\quad \xi:=\alpha p -\lam q,
\]
where $
\varepsilon(\mathbf{u})=\frac{1}{2}(\nabla\mathbf{u}+\nabla\mathbf{u}^{T})$.

It is easy to check that
\begin{align}\label{e1.5}
	p=\kappa_1 \xi + \kappa_2 \eta, \qquad q=\kappa_1 \eta-\kappa_3 \xi,
\end{align}
where
\begin{align}\label{e1.6}
	\kappa_1:= \frac{\alpha}{\alpha^2+\lam c_0},
	\quad \kappa_2:=\frac{\lam}{\alpha^2+\lam c_0}, \quad
	\kappa_3:=\frac{c_0}{\alpha^2+\lam c_0}.
\end{align}

Then \reff{e1.1}-\reff{e1.2} can be written as
\begin{alignat}{2} \label{e1.7}
	-\div\mathcal{N}(\varepsilon(\bu)) + \nab \xi &= \bbf &&\qquad \mbox{in } \Ome_T,\\
	\kappa_3\xi +\div \bu &=\kappa_1\eta &&\qquad \mbox{in } \Ome_T, \label{e1.8}\\
	\eta_t - \frac{1}{\mu_f} \div[K (\nab (\kappa_1 \xi + \kappa_2 \eta)-\rho_f\bg)]&=\phi
	&&\qquad \mbox{in } \Ome_T, \label{e1.9}
\end{alignat}
where $p$ and $q$ are related to $\xi$ and $\eta$
through the algebraic equations in \reff{e1.5}.

The boundary and initial conditions \eqref{e1.4a}-\eqref{e1.4c} can be rewritten as
\begin{alignat}{2} \label{20150712_1}
	\sigma(\bu)\bn-\alpha (\kappa_1 \xi + \kappa_2 \eta) \bn &= \bbf_1
	&&\qquad \mbox{on } \p\Ome_T,\\
	-\frac{K}{\mu_f} \bigl(\nab (\kappa_1 \xi + \kappa_2 \eta) -\rho_f \bg \bigr)\cdot \bn
	&=\phi_1 &&\qquad \mbox{on } \p\Ome_T, \label{20150712_2} \\
	\bu=\bu_0,\qquad p&=p_0 &&\qquad \mbox{in } \Ome\times\{t=0\}. \label{20150712_3}
\end{alignat}

\begin{remark}\label{rem210122-1}
It is clear that $(\bu, \xi)$ satisfies a generalized nonlinear Stokes problem for
a given $\eta$, where $\kappa_3\xi$ in \eqref{e1.8} acts as a penalty term, and $\eta$ satisfies a diffusion problem for a given $\xi$. This new formulation reveals the underlying deformation and diffusion  processes which occurs in the poroelastic material.
\end{remark}
\begin{remark}\label{rem211007-1}
The original pressure is eliminated in the reformulation, which will be helpful to overcome the ``locking phenomenon", the later numerical tests show that our proposed method has no ``locking phenomenon" ( see Section \ref{sec-4}).
\end{remark}

To define the weak solution, we need the standard function space notation, one can see \cite{bs08,cia,temam} for details.
In particular, $(\cdot,\cdot)$ and $\langle \cdot,\cdot\rangle$
denote respectively the standard $L^2(\Ome)$ and $L^2(\p\Ome)$ inner products.
For any Banach space $B$, we let $\mathbf{B}=[B]^d$,
and use $\mathbf{B}^\prime$ to denote its dual space. In particular,
we use $(\cdot,\cdot)$ 
to denote the dual product on $\bH^1(\Ome)' \times \bH^1(\Ome)$,
and $\norm{\cdot}{L^p(B)}$ is a shorthand notation for
$\norm{\cdot}{L^p((0,T);B)}$.


Letting $A=(a_{ij})_{n\times n}$, we respectively define the product $(A, A)$, $F$-norm $\|A\|_F$ and the $L^2$-norm $\|A\|_{L^2(\Omega)}$ of the matrix by
\begin{eqnarray}
	\|A\|_{F} =:(A:A)^\frac{1}{2}=: (\sum_{i,j=1}^{n}a_{ij}^2)^\frac{1}{2},\label{202002181}\\
	(A, A)=\int_\Omega A:A dx=:\|A\|_{L^2(\Omega)} ({\rm or\ denoted\ by }\ \|A\|_{L^2}  ).\label{eq20210119-1}
\end{eqnarray}

We also introduce the function spaces
\begin{align*}
	&L^2_0(\Omega):=\{q\in L^2(\Omega);\, (q,1)=0\}, \qquad \bX:= \bH^1(\Ome).
\end{align*}
From \cite{temam},  it is well known that the following so-called
inf-sup condition holds in the space $\bX\times L^2_0(\Ome)$:
\begin{align}\label{e2.0}
	\sup_{\bv\in \bX}\frac{(\div \bv,\varphi)}{\norm{\bv}{H^1(\Ome)}}
	\geq \alpha_0 \norm{\varphi}{L^2(\Ome)} \qquad \forall
	\varphi\in L^2_0(\Ome),\quad \alpha_0>0.
\end{align}

Let
\[
\bRM:=\{\br:=\ba+\bb \times x;\, \ba, \bb, x\in \R^d\}
\]
denotes the space of infinitesimal rigid motions. From \cite{brenner,gra,temam}, it is well known  that $\bRM$ is the kernel of
the strain operator $\vepsi$, that is, $\br\in \bRM$ if and only if
$\vepsi(\br)=0$. Hence, we have
\begin{align}
	\vepsi(\br)=0,\quad \div \br=0 \qquad\forall \br\in \bRM. \label{e4.100}
\end{align}

$\bL^2_\bot(\p\Ome)$ and $\bH^1_\bot(\Ome)$ denote respectively the
subspaces of $\bL^2(\p\Ome)$ and $\bH^1(\Ome)$ which are orthogonal to $\bRM$, that is,
\begin{align*}
	&\bH^1_\bot(\Ome):=\{\bv\in \bH^1(\Ome);\, (\bv,\br)=0\,\,\forall \br\in \bRM\},
	\\
	&\bL^2_\bot(\p\Ome):=\{\bg\in \bL^2(\p\Ome);\,\langle \bg,\br\rangle=0\,\,
	\forall \br\in \bRM \}.
\end{align*}


Next, we define the weak solutions to problem \reff{e1.1}--\reff{e1.4c}. For convenience, we assume that $\bbf, \bbf_1,\phi$
and $\phi_1$ all are independent of $t$ in the remaining of the paper. Note that all the results of this paper can be easily extended to the case of time-dependent source functions.

\begin{definition}\label{weak1}
	Let $\bu_0\in\bH^1(\Ome), \bbf\in\bL^2(\Omega),
	\bbf_1\in \bL^2(\p\Ome), p_0\in L^2(\Ome), \phi\in L^2(\Ome)$,
	and $\phi_1\in  L^2(\p\Ome)$.  Assume $c_0>0$ and
	$(\bbf,\bv)+\langle \bbf_1, \bv \rangle =0$ for any $\bv\in \mathbf{RM}$.
	Given $T > 0$, a tuple $(\bu,p)$ with
	\begin{alignat*}{2}
		&\bu\in L^\infty\bigl(0,T; \bH_\perp^1(\Ome)),
		&&\qquad p\in L^\infty(0,T; L^2(\Omega))\cap L^2 \bigl(0,T; H^1(\Omega)\bigr), \\
		&p_t, (\div\bu)_t \in L^2(0,T;H^{1}(\Ome)'),
		&&\qquad 
	\end{alignat*}
	is called a weak solution to the problem \reff{e1.1}-\reff{e1.4c},
	if for almost every $t \in [0,T]$ there holds
	\begin{alignat}{2}\label{e2.1}
		&\bigl( \mathcal{N}(\varepsilon(\bu)), \vepsi(\bv) \bigr)
		+\lam\bigl(\div\bu, \div\bv \bigr)
		-\alpha \bigl( p, \div \bv \bigr)  && \\
		&\hskip 2in
		=(\bbf, \bv)+\langle \bbf_1,\bv\rangle
		&&\quad\forall \bv\in \bH^1(\Ome), \no \\
		&\bigl((c_0 p +\alpha\div\bu)_t, \varphi \bigr)
		+ \frac{1}{\mu_f} \bigl( K(\nab p-\rho_f\bg), \nab \varphi \bigr)
		\label{e2.2} \\
		&\hskip 2in =\bigl(\phi,\varphi\bigr)
		+\langle \phi_1,\varphi \rangle
		&&\quad\forall \varphi \in H^1(\Ome), \no  \\
		&\bu(0) = \bu_0,\qquad p(0)=p_0.  && \label{e2.3}
	\end{alignat}
\end{definition}

\medskip
Similarly, we can define a weak solution to the problem \reff{e1.7}-\reff{e1.9}, \reff{20150712_1}-\reff{20150712_3}.

\begin{definition}\label{weak2}
	Let $\bu_0\in \bH^1(\Ome), \bbf \in \bL^2(\Omega),
	\bbf_1 \in \bL^2(\p\Ome), p_0\in L^2(\Ome), \phi\in L^2(\Ome)$,
	and $\phi_1\in L^2(\p\Ome)$.  Assume $c_0>0$ and
	$(\bbf,\bv)+\langle \bbf_1, \bv \rangle =0$ for any $\bv\in \mathbf{RM}$.
	Given $T > 0$, a $3$-tuple $(\bu,\xi,\eta)$ with
	\begin{alignat*}{2}
		&\bu\in L^\infty\bigl(0,T; \bH_\perp^1(\Ome)), &&\quad
		\xi\in L^\infty \bigl(0,T; L^2(\Omega)\bigr), \\
		&\eta\in L^\infty\bigl(0,T; L^2(\Omega)\bigr)
		\cap H^1\bigl(0,T; H^{1}(\Omega)'\bigr),
		&&
	\end{alignat*}
	is called a weak solution to the problem \reff{e1.7}-\reff{e1.9}, \reff{20150712_1}-\reff{20150712_3}
	if for almost every $t \in [0,T]$, there has
	\begin{alignat}{2}\label{e2.4}
		\bigl(\mathcal{N}(\varepsilon(\bu)), \vepsi(\bv) \bigr)-\bigl( \xi, \div \bv \bigr)
		&= (\bbf, \bv)+\langle \bbf_1,\bv\rangle
		&&\quad\forall \bv\in \bH^1(\Ome), \\
		\kappa_3 \bigl( \xi, \varphi \bigr) +\bigl(\div\bu, \varphi \bigr)
		&= \kappa_1\bigl(\eta, \varphi \bigr) &&\quad\forall \varphi \in L^2(\Ome), \label{e2.5}  \\
		\bigl(\eta_t, \psi \bigr)
		+\frac{1}{\mu_f} \bigl(K(\nab (\kappa_1\xi +\kappa_2\eta) &-\rho_f\bg), \nab \psi \bigr) \label{e2.6} \\
		&= (\phi, \psi)+\langle \phi_1,\psi\rangle &&\quad\forall \psi \in H^1(\Ome) , \no  \\
		p:=\kappa_1\xi +\kappa_2\eta, \qquad
		&q:=\kappa_1\eta-\kappa_3\xi, && \label{e2.7} \\
		\eta(0)= \eta_0:&=c_0p_0+\alpha q_0,  && \label{e2.9}
	\end{alignat}
	where $q_0:=\div \bu_0$, $u_0$ and $p_0$ are same as in Definition \ref{weak1}.
\end{definition}


\begin{remark}\label{rem-2.1}
	The reason for introducing the space $\bH_\perp^1(\Ome)$
	in the above two definitions is that the boundary condition \eqref{e1.4a}
	is a pure "Neumann condition". If it is replaced by a pure Dirichlet condition or by a mixed Dirichlet-Neumann condition, there is no need to introduce this space.  Thus, from the analysis point of view, the pure Neumann condition case is the most difficult case.
\end{remark}

\subsection{Existence, uniqueness and regularity of the weak solution}\label{sec-2.3}

From \cite{dautray}, we know that there exists a constant $c_1>0$ such that
\[
\inf_{\br\in \bRM}\|\bv+\br\|_{L^2(\Ome)}
\le c_1\|\vepsi(\bv)\|_{L^2(\Ome)} \qquad\forall \bv\in\bH^1(\Ome).
\]
Hence, for each $\bv\in \bH^1_\bot(\Ome)$ there holds
\begin{align}\label{e4.1+}
	\|\bv\|_{L^2(\Ome)}=\inf_{\br\in \bRM} \sqrt{\|\bv+\br\|_{L^2(\Ome)}^2-\|\br\|_{L^2(\Ome)}^2 }
	\le c_1\|\vepsi(\bv)\|_{L^2(\Ome)},
\end{align}
and by the well-known Korn's inequality in \cite{dautray}, there exists some $c_2>0$ satisfies that
\begin{align} \label{e4.1+0}
	\|\bv\|_{H^1(\Ome)} &\le c_2[\|\bv\|_{L^2(\Ome)}+\|\vepsi(\bv)\|_{L^2(\Ome)}]\\
	&\le c_2(1+c_1)\|\vepsi(\bv)\|_{L^2(\Ome)} \qquad\forall \bv\in\bH^1_\bot(\Ome).\no
\end{align}

By Lemma 2.1 of \cite{brenner} we know that for any $q\in L^2(\Ome)$, there
exists $\bv\in \bH^1_\bot(\Ome)$ such that $\div \bv=q$ and
$\|\bv\|_{H^1(\Ome)} \leq C\|q\|_{L^2(\Ome)}$. An immediate consequence of
this lemma is that there holds the following alternative version of the inf-sup condition:
\begin{align}\label{e2.0a}
	\sup_{\bv\in \bH^1_\bot(\Ome)}\frac{(\div \bv,\varphi)}{\norm{\bv}{H^1(\Ome)}}
	\geq \alpha_1 \norm{\varphi}{L^2(\Ome)} \qquad \forall
	\varphi\in L^2_0(\Ome),\quad \alpha_1>0.
\end{align}
To obtain the existence, uniqueness and regularity of the weak solution, we need the following assumption:

\begin{assumption}\label{assu210204-1}
 There exist $C_{c}, C_{l}, C_{m}>0$ and $C_{c}, C_{m}>\sqrt{2}\lambda$ such that
\begin{eqnarray}
	(\sigma(\mathbf{u}),\varepsilon(\mathbf{u})) &\geq& C_{c}\|\varepsilon(\mathbf{u})\|_{L^{2}}^{2}, \label{202006028}\\
	\|\sigma(\mathbf{u})-\sigma(\mathbf{v})\| &\leq& C_{l}\|\varepsilon(\mathbf{u})-\varepsilon(\mathbf{v})\|_{L^{2}}, \label{202006029}\\
	(\sigma(\mathbf{u})-\sigma(\mathbf{v}),\varepsilon(\mathbf{u})-\varepsilon(\mathbf{v}))
	&\geq& C_{m}\|\varepsilon(\mathbf{u})-\varepsilon(\mathbf{v})\|_{L^{2}}^{2}. \label{2020060210}
\end{eqnarray}
\end{assumption}
We remark that Assumption \ref{assu210204-1} is reasonable for that there exist some terms of $\sigma(\mathbf{u})$ satisfies \reff{202006028}-\reff{2020060210}, as for the details, one can refer to \cite{BOFFI2019} or \cite{mddp}, and we omit the details here.

\begin{lemma}\label{weak3}
	Assume that\ $\mathbf{u}, \mathbf{v}\in L^\infty\bigl(0,T; H_\perp^1(\Omega))$, there exists $C_{cv}, C_{lp}, C_{mn}>0$ satisfy the following conditions:
	\begin{eqnarray}
		(\mathcal{N}(\varepsilon(\mathbf{u})),\varepsilon(\mathbf{u})) &\geq& C_{cv}\|\varepsilon(\mathbf{u})\|_{L^{2}}^{2}, \label{202002032}\\
		\|\mathcal{N}(\varepsilon(\mathbf{u}))-\mathcal{N}(\varepsilon(\mathbf{v}))\| &\leq& C_{lp}\|\varepsilon(\mathbf{u})-\varepsilon(\mathbf{v})\|_{L^{2}}, \label{202002033}\\
		(\mathcal{N}(\varepsilon(\mathbf{u}))-\mathcal{N}(\varepsilon(\mathbf{v})),\varepsilon(\mathbf{u})-\varepsilon(\mathbf{v}))
		&\geq& C_{mn}\|\varepsilon(\mathbf{u})-\varepsilon(\mathbf{v})\|_{L^{2}}^{2}. \label{202002034}
	\end{eqnarray}
\end{lemma}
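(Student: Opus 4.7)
The plan is to exploit the defining relation $\mathcal{N}(\vepsi(\bu))=\sigma(\bu)-\lam(\div\bu)I$ and reduce each of \reff{202002032}--\reff{202002034} to its counterpart in Assumption~\ref{assu210204-1} modulo a controllable correction coming from the $(\div\bu)I$ term. The only analytic inputs I would need, beyond the hypotheses on $\sigma$, are the elementary matrix identities $(sI,\vepsi(\bw))=(s,\div\bw)$ and $\|sI\|_{L^2}=\sqrt{d}\|s\|_{L^2}$, together with the pointwise Cauchy--Schwarz consequence $|\div\bw|=|\mbox{tr}\,\vepsi(\bw)|\le\sqrt{d}\,\|\vepsi(\bw)\|_F$, which on integration yields $\|\div\bw\|_{L^2}\le\sqrt{d}\,\|\vepsi(\bw)\|_{L^2}$. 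Note $\sqrt{d}=\sqrt{2}$ in the two-dimensional setting that is implicit in the threshold $\sqrt{2}\lam$ of Assumption~\ref{assu210204-1}.

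For the Lipschitz bound \reff{202002033} I would first subtract the two definitions, obtaining $\mathcal{N}(\vepsi(\bu))-\mathcal{N}(\vepsi(\bv))=\bigl(\sigma(\bu)-\sigma(\bv)\bigr)-\lam\,\div(\bu-\bv)\,I$, and then apply the triangle inequality in $L^2$. The $\sigma$-piece is bounded by $C_l\|\vepsi(\bu-\bv)\|_{L^2}$ via \reff{202006029}, while the second piece is $\lam\sqrt{d}\,\|\div(\bu-\bv)\|_{L^2}\le \lam d\,\|\vepsi(\bu-\bv)\|_{L^2}$ by the identities of the first step. Summing yields \reff{202002033} with $C_{lp}=C_l+d\lam$.

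For coercivity \reff{202002032} I pair $\mathcal{N}(\vepsi(\bu))$ with $\vepsi(\bu)$ to get $(\mathcal{N}(\vepsi(\bu)),\vepsi(\bu))=(\sigma(\bu),\vepsi(\bu))-\lam\|\div\bu\|_{L^2}^2$, then absorb the second term via $\lam\|\div\bu\|_{L^2}^2\le\sqrt{2}\lam\|\vepsi(\bu)\|_{L^2}^2$, which I would obtain by applying a weighted Young's inequality to the intermediate bound $\lam\sqrt{2}\,\|\div\bu\|_{L^2}\|\vepsi(\bu)\|_{L^2}$ supplied by the trace inequality of the first step. Combined with \reff{202006028} this gives \reff{202002032} with $C_{cv}=C_c-\sqrt{2}\lam$, positivity coming precisely from Assumption~\ref{assu210204-1}. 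The strong monotonicity \reff{202002034} is then obtained by an identical calculation after replacing $\bu$ by $\bu-\bv$ throughout and invoking \reff{2020060210} in place of \reff{202006028}, yielding $C_{mn}=C_m-\sqrt{2}\lam>0$.

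The only subtle point is the extraction of the sharp constant $\sqrt{2}\lam$ in the coercivity and monotonicity steps; a naive two-step Cauchy--Schwarz would give only the weaker $d\lam=2\lam$. The proof therefore requires pairing the pointwise trace bound $|\div\bw|\le\sqrt{2}\,\|\vepsi(\bw)\|_F$ (valid for $d=2$) with a weighted Young's inequality arranged so that the $\lam$-contribution lands on $\|\vepsi(\bu)\|_{L^2}^2$ with coefficient exactly $\sqrt{2}$. This constant-chasing is the sole bookkeeping obstacle; the remainder of the argument is direct algebraic manipulation of the decomposition $\mathcal{N}(\vepsi(\bu))=\sigma(\bu)-\lam(\div\bu)I$.
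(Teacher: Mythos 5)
You follow the paper's approach exactly: decompose $\mathcal{N}(\vepsi(\bu))=\sigma(\bu)-\lam(\div\bu)I$, invoke Assumption~\ref{assu210204-1} on the $\sigma$-part, and absorb the $\lam(\div\bu)I$ correction. Your Lipschitz step is sound as written: it yields $C_{lp}=C_l+d\lam=C_l+2\lam$ (for $d=2$), since $\|(\div\bw)I\|_{L^2}=\sqrt{2}\|\div\bw\|_{L^2}\le 2\|\vepsi(\bw)\|_{L^2}$, with equality when $\vepsi(\bw)=sI$. The paper asserts the sharper $C_l+\sqrt{2}\lam$, which its own two-step estimate does not actually deliver; for an upper Lipschitz bound this discrepancy is harmless, so your version is correct, merely with a larger admissible constant.

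The genuine gap is in your coercivity and monotonicity steps. You compute $(\mathcal{N}(\vepsi(\bu)),\vepsi(\bu))=(\sigma(\bu),\vepsi(\bu))-\lam\|\div\bu\|_{L^2}^2$ and note $\|\div\bu\|_{L^2}\le\sqrt{2}\|\vepsi(\bu)\|_{L^2}$, but the ``weighted Young's inequality'' you propose to sharpen $\lam\|\div\bu\|_{L^2}^2\le 2\lam\|\vepsi(\bu)\|_{L^2}^2$ down to $\sqrt{2}\lam\|\vepsi(\bu)\|_{L^2}^2$ cannot exist. Taking $\vepsi(\bu)=sI$ gives $\|\div\bu\|_{L^2}^2=2\|\vepsi(\bu)\|_{L^2}^2$ exactly, so $2\lam$ is the optimal constant; applying Young to the intermediate product $\sqrt{2}\lam\|\div\bu\|_{L^2}\|\vepsi(\bu)\|_{L^2}$ and optimizing the weight simply returns $2\lam\|\vepsi(\bu)\|_{L^2}^2$. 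Your argument therefore produces $C_{cv}=C_c-2\lam$ and $C_{mn}=C_m-2\lam$, which require $C_c,C_m>2\lam$ to be positive, whereas Assumption~\ref{assu210204-1} only guarantees $C_c,C_m>\sqrt{2}\lam$. The paper's own proof contains the same unjustified claim, namely $(\div\bu\,I,\vepsi(\bu))\le\sqrt{2}\|\vepsi(\bu)\|_{L^2}^2$, so the threshold $\sqrt{2}\lam$ in Assumption~\ref{assu210204-1} appears to be an error that should read $2\lam$; under that corrected hypothesis your elementary estimate (without any appeal to weighted Young) closes the proof.
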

\begin{proof} Using \reff{202006029}, we have
		\begin{eqnarray}
		&&\|\mathcal{N}(\varepsilon(\mathbf{u}))-\mathcal{N}(\varepsilon(\mathbf{v}))\|_{L^{2}}=\|\sigma(\mathbf{u})-\lambda \div\mathbf{u}I-\sigma(\mathbf{v})+\lambda \div\mathbf{v}I\|_{L^{2}}\nonumber\\
		&&\leq \|\sigma(\mathbf{u})-\sigma(\mathbf{v})\|_{L^{2}}+\lambda\|\div\mathbf{u}I-\div\mathbf{v}I\|_{L^{2}} \nonumber \\
		&&\leq C_{l}\|\varepsilon(\mathbf{u})-\varepsilon(\mathbf{v})\|_{L^{2}}+\lambda\|\div\mathbf{u}I-\div\mathbf{v}I\|_{L^{2}}\nonumber\\
		&&\leq C_{l}\|\varepsilon(\mathbf{u})-\varepsilon(\mathbf{v})\|_{L^{2}}+\sqrt{2}\lambda\|\varepsilon(\mathbf{u})-\varepsilon(\mathbf{v})\|_{L^{2}}\nonumber\\
		&&=(C_{l}+\sqrt{2}\lambda)\|\varepsilon(\mathbf{u})-\varepsilon(\mathbf{v})\|_{L^{2}},\no
	\end{eqnarray}
which implies that \reff{202002033} holds with $C_{lp}:=C_{l}+\sqrt{2}\lambda$.
	
Similarly, using \reff{202002032} and the Cauchy-Schwartz inequality, we get
	\begin{eqnarray}\label{202003172}
		&&(\mathcal{N}(\varepsilon(\mathbf{u})),\varepsilon(\mathbf{u}))=( \sigma(\mathbf{u})-\lambda \div\mathbf{u}I,\varepsilon(\mathbf{u}) )\\
		&& \geq C_{c}\|\varepsilon(\mathbf{u})\|_{L^{2}}^{2}-\lambda(\div\mathbf{u}I,\varepsilon(\mathbf{u})) \geq C_{c}\|\varepsilon(\mathbf{u})\|_{L^{2}}^{2}-\sqrt{2}\lambda \|\varepsilon(\mathbf{u})\|_{L^{2}}^{2}\nonumber \\
		&&= (C_{c}-\sqrt{2}\lambda) \|\varepsilon(\mathbf{u})\|_{L^{2}}^{2},\nonumber
	\end{eqnarray}
	which implies that \eqref{202002032} holds with $C_{cv}:=C_{c}-\sqrt{2}\lambda>0$.
	
It is easy to check that \eqref{202002034} holds if $C_{mn}:=C_{m}-\sqrt{2}\lambda>0$. The proof is complete.
\end{proof}

\begin{lemma}\label{estimates}
There exists a positive constant
$ C_1=C_1\bigl(\|\bu_0\|_{H^1(\Ome)}, \|p_0\|_{L^2(\Ome)},$
$\|\bbf\|_{L^2(\Ome)},\|\bbf_1\|_{L^2(\p \Ome)},\|\phi\|_{L^2(\Ome)}, \|\phi_1\|_{L^2(\p\Ome)} \bigr)$
such that
\begin{align}\label{e2.15d}
&\sqrt{C_{cv}}\|\varepsilon(\bu)\|_{L^\infty(0,T;L^2(\Ome))}
+\sqrt{\kappa_2} \|\eta\|_{L^\infty(0,T;L^2(\Ome))} \\
&\qquad
+\sqrt{\kappa_3} \|\xi\|_{L^\infty(0,T;L^2(\Ome))}
+\sqrt{\frac{K_1}{\mu_f}} \|\nab p \|_{L^2(0,T;L^2(\Ome))} \leq C_1, \no \\
&\|\bu\|_{L^\infty(0,T;L^2(\Ome))}\leq C_1, \quad
\|p\|_{L^\infty(0,T;L^2(\Ome))} \leq C_1 \bigl( \kappa_2^{\frac12} + \kappa_1 \kappa_3^{-\frac12}
\bigr), \label{e2.15e} \\
&\|p\|_{L^2(0,T; L^2(\Ome))} \leq C_1, \quad
\|\xi\|_{L^2(0,T;L^2(\Ome))} \leq C_1\kappa_1^{-1} \bigl(1+ \kappa_2^{\frac12} \bigr).
\label{e2.15f}
\end{align}
\end{lemma}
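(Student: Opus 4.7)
The plan is a combined energy estimate obtained by testing the three weak-form equations with carefully chosen functions so that the solid-fluid coupling cancels. I would take $\bv=\bu_t$ in \eqref{e2.4}, differentiate \eqref{e2.5} in time and then test with $\varphi=\xi$, and take $\psi=p=\kappa_1\xi+\kappa_2\eta$ in \eqref{e2.6}. The differentiated second equation yields the identity $\kappa_1(\eta_t,\xi)=\tfrac{\kappa_3}{2}\tfrac{d}{dt}\|\xi\|_{L^2}^2+(\div\bu_t,\xi)$, which I substitute into the cross-term $(\eta_t,p)=\kappa_1(\eta_t,\xi)+\tfrac{\kappa_2}{2}\tfrac{d}{dt}\|\eta\|_{L^2}^2$ produced by the third equation. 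Summing the resulting three identities cancels the coupling $\pm(\xi,\div\bu_t)$ and yields the clean differential identity
\[
\bigl(\mathcal{N}(\varepsilon(\bu)),\varepsilon(\bu_t)\bigr)+\frac{d}{dt}\Bigl[\frac{\kappa_3}{2}\|\xi\|_{L^2}^2+\frac{\kappa_2}{2}\|\eta\|_{L^2}^2\Bigr]+\frac{1}{\mu_f}\|K^{1/2}\nabla p\|_{L^2}^2 = (\bbf,\bu_t)+\langle\bbf_1,\bu_t\rangle+(\phi,p)+\langle\phi_1,p\rangle+\tfrac{1}{\mu_f}(K\rho_f\bg,\nabla p).
\]
Time-independence of $\bbf$ and $\bbf_1$ lets me rewrite $(\bbf,\bu_t)+\langle\bbf_1,\bu_t\rangle$ as a full time derivative, which I move to the left side.

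The main obstacle is the nonlinear rate term $\int_0^t(\mathcal{N}(\varepsilon(\bu)),\varepsilon(\bu_s))\,ds$ that appears after integrating in time; in the linear case this would simply be $\mu\|\varepsilon(\bu(t))\|_{L^2}^2-\mu\|\varepsilon(\bu_0)\|_{L^2}^2$, but here one cannot generally recover a difference of energies from mere monotonicity. Under the standard hyperelastic hypothesis (compatible with Assumption \ref{assu210204-1}) that $\mathcal{N}$ is the gradient of a strain-energy potential $\Psi$ with $\Psi(0)=0$ and $\mathcal{N}(0)=0$, the chain rule converts the integral into $\Psi(\varepsilon(\bu(t)))-\Psi(\varepsilon(\bu_0))$. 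The strong monotonicity and Lipschitz bounds of Lemma~\ref{weak3} then yield the two-sided estimate $\tfrac{C_{cv}}{2}\|\varepsilon\|_{L^2}^2\le\Psi(\varepsilon)\le\tfrac{C_{lp}}{2}\|\varepsilon\|_{L^2}^2$, which provides coercivity in $\|\varepsilon(\bu(t))\|_{L^2}^2$ on the left and a bounded initial-energy contribution on the right.

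To close the estimate, I apply Cauchy-Schwarz and Young's inequalities together with the trace theorem on the boundary pairings to absorb all $\bu$-pairings into small multiples of $\|\varepsilon(\bu)\|_{L^2}^2$ via Korn's inequality \eqref{e4.1+0} and \eqref{e4.1+}, and all $\nabla p$-pairings into small multiples of $\|K^{1/2}\nabla p\|_{L^2}^2$; Gronwall's lemma then produces the first four-term bound in \eqref{e2.15d}. The remaining bounds follow algebraically: $\|\bu\|_{L^\infty(0,T;L^2)}$ from \eqref{e4.1+}; $\|p\|_{L^\infty(0,T;L^2)}$ from $p=\kappa_1\xi+\kappa_2\eta$ combined with the already-established $L^\infty(0,T;L^2)$ bounds on $\xi$ and $\eta$ and the coefficients in \eqref{e1.6}; $\|p\|_{L^2(0,T;L^2)}$ by Poincar\'e applied to the zero-mean projection of $p$; and $\|\xi\|_{L^2(0,T;L^2)}$ either from the algebraic identity $\kappa_3\xi=\kappa_1\eta-\div\bu$ contained in \eqref{e2.5} or via the inf-sup condition \eqref{e2.0a} applied to \eqref{e2.4} together with the Lipschitz estimate \eqref{202002033}.
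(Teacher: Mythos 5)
Your proposal uses the same test functions as the paper (taking $\bv=\bu_t$ in \eqref{e2.4}, differentiating \eqref{e2.5} in time and testing with $\xi$, and taking $\psi=p$ in \eqref{e2.6}), and it arrives at the same energy identity as the paper's \reff{eq210121-2}. Where you differ — and improve on — the paper is in the treatment of the nonlinear term $\int_0^t\bigl(\mathcal{N}(\varepsilon(\bu)),\varepsilon(\bu_s)\bigr)\,ds$. The paper simply says it ``uses \eqref{202002032}'' on this term, but \eqref{202002032} is a coercivity statement about $\bigl(\mathcal{N}(\varepsilon(\bu)),\varepsilon(\bu)\bigr)$, not about $\bigl(\mathcal{N}(\varepsilon(\bu)),\varepsilon(\bu_t)\bigr)$, and Assumption~\ref{assu210204-1} (coercivity, Lipschitz continuity, strong monotonicity) alone does not let one convert the time-integrated pairing into an $L^\infty$-in-time bound on $\|\varepsilon(\bu(t))\|_{L^2}$. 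You correctly flag this gap and close it by invoking a hyperelastic potential $\Psi$ with $\nabla\Psi=\mathcal{N}$ and $\mathcal{N}(0)=0$, $\Psi(0)=0$, which converts the integral into the difference $\Psi(\varepsilon(\bu(t)))-\Psi(\varepsilon(\bu_0))$; the two-sided bound $\tfrac{C_{cv}}{2}\|\varepsilon\|_{L^2}^2\le\Psi(\varepsilon)\le\tfrac{C_{lp}}{2}\|\varepsilon\|_{L^2}^2$ then follows from the fundamental theorem of calculus along the segment $s\varepsilon$, $s\in[0,1]$, together with \eqref{202002032} and \eqref{202002033}. Be aware that this potential hypothesis is a genuine \emph{additional} assumption (coercivity and monotonicity do not imply gradient structure), even though it is the standard physical one; the paper never states it explicitly, so if you adopt it you should say so. Your closing steps (absorbing source and boundary pairings via Young, Korn, trace, and Poincar\'e inequalities, then Gronwall; extracting the $\bu$, $p$, and $\xi$ bounds from \eqref{e4.1+}, the algebraic relation $p=\kappa_1\xi+\kappa_2\eta$, and the inf-sup condition \eqref{e2.0a}) match the paper's intent and are sound.
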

\begin{proof}
Using \eqref{e2.6} and taking $\psi=p$, we have
\begin{align}\label{1}
 \int_0^t\bigl(\eta_t, p \bigr)ds
&+\int_0^t\frac{1}{\mu_f} \bigl(K(\nab (\kappa_1\xi +\kappa_2\eta) -\rho_f\bg), \nab p \bigr)ds\\
&= \int_0^t[(\phi, p)+\langle \phi_1,p\rangle]ds \quad\forall \psi \in H^1(\Ome).\no
\end{align}
Taking $v=u_{t}$ in \eqref{e2.4} and differential \eqref{e2.5} with respect to $t$ and takes $\varphi=\xi$, we have
\begin{align}\label{2}
 \int_0^t\bigl(\eta_t, p \bigr)ds &=\int_0^t\bigl(\eta_t, \kappa_1\xi +\kappa_2\eta \bigr)ds \\
   &=\int_0^t\bigl(\eta_t, \kappa_1\xi\bigr)ds +\int_0^t\bigl(\eta_t, \kappa_2\eta \bigr)ds\no\\
   &=\frac{1}{2}\kappa_2 (\norm{\eta(t)}{L^2(\Ome)}^2-\norm{\eta(0)}{L^2(\Ome)}^2)+ \int_0^t\bigl(\eta_t, \kappa_1\xi\bigr)ds\no
\end{align}
and
\begin{align}\label{3}
 \int_0^t\bigl(\eta_t, \kappa_1\xi\bigr)ds&= \int_0^t\bigl(q_{t}+\kappa_3\xi_{t}, \xi\bigr)ds \\
   &= \int_0^t\bigl(\div\bu_{t}, \xi\bigr)ds+\frac{1}{2}\kappa_3\bigl( \norm{\xi(t)}{L^2(\Ome)}^2-\norm{\xi(0)}{L^2(\Ome)}^2\bigr)\no\\
   &=\int_0^t[\bigl(\mathcal{N}(\varepsilon(\bu)), \vepsi(\bu_{t}) \bigr)- (\bbf, \bu_{t})+\langle \bbf_1,\bu_{t}\rangle]ds\no\\
   &\qquad\qquad+\frac{1}{2}\kappa_3\bigl( \norm{\xi(t)}{L^2(\Ome)}^2-\norm{\xi(0)}{L^2(\Ome)}^2\bigr).\no
\end{align}
Substituting \eqref{2} and \eqref{3} into \eqref{1}, we obtain
\begin{align}
&\int_0^t\bigl(\mathcal{N}(\varepsilon(\bu)), \vepsi(\bu_{t}) \bigr)ds + \frac12 \Bigl[\kappa_2 \norm{\eta(t)}{L^2(\Ome)}^2 +\kappa_3 \norm{\xi(t)}{L^2(\Ome)}^2] \label{eq210121-2}\\
&\qquad\qquad+\frac{1}{\mu_f} \int_0^t \bigl(K(\nab p-\rho_f\bg), \nab p\bigr)\, ds\nonumber\\
&= \frac12 \Bigl[\kappa_2 \norm{\eta(0)}{L^2(\Ome)}^2 +\kappa_3 \norm{\xi(0)}{L^2(\Ome)}^2 -2\bigl(\bbf,\bu(0)\bigr) -2\langle \bbf_1, \bu(0) \rangle \Bigr]\nonumber\\
&\qquad\qquad+\int_0^t \bigl(\phi, p\bigr)\, ds+\int_0^t \langle \phi_1, p \rangle\, ds.\nonumber
\end{align}

Using \eqref{202002032} and \reff{eq210121-2}, and Poincare inequality, we see that  \eqref{e2.15d} and \eqref{e2.15e}  holds.

Note that \eqref{e2.15f} follows from \eqref{e2.15d}, \eqref{e4.1+}, the Poincar\`e
inequality, and the relation $p=\kappa_1\xi +\kappa_2\eta$.
\end{proof}


\begin{lemma}\label{smooth}
Let  $(\bu, \xi, \eta)$ be the weak solution of the problem \reff{e1.7}-\reff{e1.9} with \reff{e1.4a}-\reff{e1.4c}, and suppose that $\bu_0$ and $p_0$ are sufficiently smooth, then there exist positive constants $C_2=C_2\bigl(C_1,\|\nab p_0\|_{L^2(\Ome)} \bigr)$
and $C_3=C_3\bigl(C_1,C_2, \|\bu_0\|_{H^2(\Ome)},\|p_0\|_{H^2(\Ome)} \bigr)$ such that
\begin{align}\label{eq3.21}
&\sqrt{C_{cv}}\|\varepsilon(\bu_{t})\|_{L^2(\Ome)}
+\sqrt{\kappa_2} \|\eta_t\|_{L^2(0,T;L^2(\Ome))} \\
&\qquad
+\sqrt{\kappa_3} \|\xi_t\|_{L^2(0,T;L^2(\Ome))}
+\sqrt{\frac{K_1}{\mu_f}} \|\nab p \|_{L^\infty(0,T;L^2(\Ome))} \leq C_2, \no \\
&\sqrt{C_{cv}}\|\varepsilon(\bu_{t})\|_{L^2(\Ome)}
+\sqrt{\kappa_2} \|\eta_t\|_{L^\infty(0,T;L^2(\Ome)) \label{eq3.22}} \\
&\qquad
+\sqrt{\kappa_3} \|\xi_t\|_{L^\infty(0,T;L^2(\Ome))}
+\sqrt{\frac{K_1}{\mu_f}} \|\nab p_t \|_{L^2(0,T;L^2(\Ome))} \leq C_3, \no \\
&\|\eta_{tt}\|_{L^2(H^{1}(\Ome)')} \leq \sqrt{\frac{K_2}{\mu_f}}C_3. \label{eq3.23}
\end{align}
\end{lemma}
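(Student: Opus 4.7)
The plan is to prove the three bounds in sequence by differentiating the weak formulation \eqref{e2.4}--\eqref{e2.6} in time and testing with suitably chosen quantities, exploiting the time-independence of the source data $\bbf,\bbf_1,\phi,\phi_1$ and the monotonicity/Lipschitz properties collected in Lemma~\ref{weak3}. The overall pattern mirrors the derivation of \eqref{eq210121-2} in Lemma~\ref{estimates}, but each estimate is obtained one ``time-differentiation level'' higher than the previous.

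For \eqref{eq3.21}, I first differentiate \eqref{e2.4} and \eqref{e2.5} with respect to $t$, obtaining
$(\partial_t\mathcal{N}(\vepsi(\bu)),\vepsi(\bv))=(\xi_t,\div\bv)$ and $\kappa_3\xi_t+\div\bu_t=\kappa_1\eta_t$.
Taking $\bv=\bu_t$ in the first and combining with the second gives
$(\partial_t\mathcal{N}(\vepsi(\bu)),\vepsi(\bu_t))+\kappa_3\|\xi_t\|_{L^2}^2 = \kappa_1(\eta_t,\xi_t)$.
In parallel, I test the original equation \eqref{e2.6} with $\psi=p_t=\kappa_1\xi_t+\kappa_2\eta_t$, which produces
$\kappa_1(\eta_t,\xi_t)+\kappa_2\|\eta_t\|_{L^2}^2+\tfrac{1}{2\mu_f}\tfrac{d}{dt}\|K^{1/2}\nab p\|_{L^2}^2 = (\phi,p_t)+\langle\phi_1,p_t\rangle+\tfrac{1}{\mu_f}(K\rho_f\bg,\nab p_t)$.
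Adding the two identities and integrating over $[0,t]$ cancels the indefinite cross term. The strong-monotonicity estimate \eqref{202002034}, applied to the difference quotient of $\mathcal{N}(\vepsi(\bu))$ in time and then passed to the limit, yields the lower bound $(\partial_t\mathcal{N}(\vepsi(\bu)),\vepsi(\bu_t))\ge C_{mn}\|\vepsi(\bu_t)\|_{L^2}^2$. The time-independent source terms on the right can be integrated by parts in time to produce boundary contributions at $t$ and $0$, which I absorb via Young's inequality into the $\|\nab p(t)\|_{L^2}^2$ term, leaving data- and initial-data-dependent constants. This delivers \eqref{eq3.21} with $C_2$ depending on $\|\nab p_0\|_{L^2}$ and $C_1$.

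For \eqref{eq3.22}, the strategy is to repeat the previous argument one level higher: differentiate \eqref{e2.4}--\eqref{e2.6} twice in $t$ (using again that the source data are time-independent), and test the twice-differentiated momentum equation with $\bu_{tt}$, the twice-differentiated constraint with $\xi_{tt}$, and the once-differentiated diffusion equation with $p_t$. Combining the three identities via the relation $\kappa_3\xi_{tt}+\div\bu_{tt}=\kappa_1\eta_{tt}$ produces an identity of the form $\tfrac12\tfrac{d}{dt}[(\mathcal{N}'(\vepsi(\bu))\vepsi(\bu_t),\vepsi(\bu_t))+\kappa_2\|\eta_t\|_{L^2}^2+\kappa_3\|\xi_t\|_{L^2}^2]+\tfrac{1}{\mu_f}\|K^{1/2}\nab p_t\|_{L^2}^2 \le R$, where $R$ contains a cubic ``correction'' term stemming from $\tfrac{d}{dt}(\mathcal{N}'(\vepsi(\bu))\vepsi(\bu_t),\vepsi(\bu_t))$. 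Using the coercivity of $\mathcal{N}'$ implied by \eqref{202002034} (again through a difference-quotient limit) bounds the leading term from below by $C_{mn}\|\vepsi(\bu_t)\|_{L^2}^2$, and Gronwall's inequality closes the estimate. The initial value $\|\eta_t(0)\|_{L^2}+\|\xi_t(0)\|_{L^2}$ is controlled by testing the equations at $t=0$ with smooth enough $\bu_0$, $p_0$, which is where the hypothesis ``$\bu_0,p_0$ sufficiently smooth'' enters and produces the dependence of $C_3$ on $\|\bu_0\|_{H^2}$ and $\|p_0\|_{H^2}$. The main obstacle throughout this step is the handling of the second time derivative of the nonlinear term $\mathcal{N}(\vepsi(\bu))$: one must either invoke additional differentiability of $\sigma$ or, preferably, argue via difference quotients in time so that only the already-assumed monotonicity \eqref{2020060210} and Lipschitz bound \eqref{202006029} are used and the limit $h\to 0$ is passed inside the integrated inequality.

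For \eqref{eq3.23}, the bound is essentially a direct consequence of \eqref{eq3.22}. Differentiating \eqref{e2.6} in $t$ and using that the right-hand side is time-independent gives $(\eta_{tt},\psi)=-\tfrac{1}{\mu_f}(K\nab p_t,\nab\psi)$ for every $\psi\in H^1(\Ome)$, so that the Cauchy–Schwarz inequality and the uniform bound $K\le K_2 I$ yield the pointwise-in-time estimate $\|\eta_{tt}\|_{H^1(\Ome)'}\le \tfrac{K_2}{\mu_f}\|\nab p_t\|_{L^2}$. Squaring and integrating in $t$, then substituting the $L^2(0,T;L^2)$-bound on $\nab p_t$ furnished by \eqref{eq3.22} produces the claimed inequality, with the constant $\sqrt{K_2/\mu_f}$ appearing after absorbing the remaining $K_1,K_2$ factors into $C_3$.
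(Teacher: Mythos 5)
Your overall strategy is the same as the paper's: differentiate the weak formulation in time, test with $\bu_t$, $\xi_t$, and $p_t$ (and one level higher for the second estimate), combine, and integrate. For \eqref{eq3.21} and \eqref{eq3.23}, your steps track the paper's identities \eqref{eq3.23a}--\eqref{eq3.23b} almost verbatim.

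In the \eqref{eq3.22} step, however, the test functions you specify do not combine. If you differentiate the momentum equation \eqref{e2.4} twice and test with $\bu_{tt}$, and differentiate the constraint \eqref{e2.5} twice and test with $\xi_{tt}$, the cross term you produce is $\kappa_1(\eta_{tt},\xi_{tt})$; but testing the once-differentiated diffusion equation with $p_t$ yields $\kappa_1(\eta_{tt},\xi_t)$, and the two do not cancel. The combination that actually gives the identity you write down (and which the paper uses in \eqref{eq3.23d}--\eqref{eq3.23e}) is to differentiate \eqref{e2.4} only \emph{once} and test with $\bu_{tt}$, and to test the twice-differentiated constraint with $\xi_t$, so that $(\xi_t,\div\bu_{tt})=\kappa_1(\eta_{tt},\xi_t)-\tfrac{\kappa_3}{2}\tfrac{d}{dt}\|\xi_t\|_{L^2}^2$.

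Beyond that slip, your proposal is actually more careful than the paper's on two points of substance. You correctly note that the lower bound for $(\partial_t\mathcal{N}(\vepsi(\bu)),\vepsi(\bu_t))$ should come from the strong monotonicity \eqref{202002034} via a difference-quotient limit, yielding the constant $C_{mn}$ rather than $C_{cv}$; the paper's invocation of \eqref{202002032} does not directly apply to that mixed expression. More importantly, you flag the cubic correction term generated when one integrates $\int_0^t(\mathcal{N}_t(\vepsi(\bu)),\vepsi(\bu_{tt}))\,ds$ by parts; the paper's claim that \eqref{eq3.23f} together with \eqref{202002032} implies \eqref{eq3.22} glosses over this, and some supplementary control (a Gronwall argument, as you propose, or an extra boundedness hypothesis on the second derivative of $\sigma$) is genuinely needed to close the estimate. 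You also write $\nabla p_t$ rather than $\nabla p$ in the dual estimate underlying \eqref{eq3.23}, which is the correct quantity after differentiating \eqref{e2.6} in time.
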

\begin{proof}
Note that $\bbf, \bbf_1,\phi$ and $\phi_1$ all are assumed to be independent of $t$. differentiating \eqref{e2.4} and \eqref{e2.5} with respect to $t$, taking $\bv=\bu_t$
and $\varphi=\xi_t$ in \eqref{e2.4} and \eqref{e2.5} respectively, and adding the
resulting equations up, we have
\begin{align}\label{eq3.23a}
\bigl(\mathcal{N}_{t}(\varepsilon(\bu)),\varepsilon(\bu_{t})\bigr) = \bigl(q_t,\xi_t \bigr)
=\kappa_1 \bigl(\eta_t,\xi_t\bigr) -\kappa_3\|\xi_t\|_{L^2(\Ome)}^2.
\end{align}
Setting $\psi=p_t=\kappa_1\xi_t + \kappa_2\eta_t$ in \eqref{e2.6} gives
\begin{align}\label{eq3.23b}
\kappa_1 \bigl(\eta_t,\xi_t \bigr) + \kappa_2\|\eta_t\|_{L^2(\Ome)}^2
+\frac{K}{2\mu_f} \frac{d}{dt} \|\nab p-\rho_f\bg\|_{L^2(\Ome)}^2
=\frac{d}{dt}\Bigl[ (\phi,p) +\langle \phi_1, p\rangle \Bigr].
\end{align}
By adding \eqref{eq3.23a} and \eqref{eq3.23b} and integrating on the interval $[0,T]$, we have
\begin{align*}
&\frac{K}{2\mu_f} \|\nab p(t)-\rho_f\bg\|_{L^2(\Ome)}^2
+\int_0^t \Bigl[ \bigl(\mathcal{N}_{t}(\varepsilon(\bu)),\varepsilon(\bu_{t})\bigr)
+ \kappa_2\|\eta_t\|_{L^2(\Ome)}^2 + \kappa_3\|\xi_t\|_{L^2(\Ome)}^2\Bigr]\,ds \\
&\hskip 0.65in
=\frac{K}{2\mu_f} \|\nab p_0-\rho_f\bg\|_{L^2(\Ome)}^2
+ (\phi,p(t)-p_0) +\langle \phi_1, p(t)-p_0 \rangle, \no
\end{align*}
The above inequality and \eqref{202002032} implies \eqref{eq3.21}.

To prove \eqref{eq3.22}, firstly, differentiating \eqref{e2.4} one time with respect to $t$ and setting
$\bv=\bu_{tt}$, differentiating \eqref{e2.5} twice with respect to $t$ and setting
$\varphi=\xi_t$, and adding the resulting equations, we get
\begin{align}\label{eq3.23d}
\bigl(\mathcal{N}_{t}(\varepsilon(\bu)),\varepsilon(\bu_{tt})\bigr) = \bigl(q_{tt},\xi_t \bigr)
=\kappa_1\bigl(\eta_{tt},\xi_t\bigr) -\frac{\kappa_3}{2}\frac{d}{dt}\|\xi_t\|_{L^2(\Ome)}^2.
\end{align}
Secondly, differentiating \eqref{e2.6} with respect to $t$ one time and taking
$\psi=p_t=\kappa_1\xi_t + \kappa_2\eta_t$, we have
\begin{align}\label{eq3.23e}
\kappa_1 \bigl(\eta_{tt},\xi_t \bigr) + \frac{\kappa_2}{2} \frac{d}{dt} \|\eta_t\|_{L^2(\Ome)}^2
+\frac{K}{\mu_f} \|\nab p_t\|_{L^2(\Ome)}^2 =0.
\end{align}
Finally, adding the above two qualities and integrating on the interval $[0,t]$, we can get
\begin{align}\label{eq3.23f}
&2\int_0^t\bigl(\mathcal{N}_{t}(\varepsilon(\bu)),\varepsilon(\bu_{tt})\bigr)\,ds +\kappa_2 \|\eta_t(t)\|_{L^2(\Ome)}^2
+\kappa_3 \|\xi_t(t)\|_{L^2(\Ome)}^2 + \\
&\frac{2K}{\mu_f} \int_0^t \|\nab p_t\|_{L^2(\Ome)}^2\,ds
=\kappa_2 \|\eta_t(0)\|_{L^2(\Ome)}^2
+\kappa_3 \|\xi_t(0)\|_{L^2(\Ome)}^2.\no
\end{align}
Using \eqref{eq3.23f} and \eqref{202002032}, we can deduce that \eqref{eq3.22} holds.

Using \eqref{eq3.22} and  the following inequality
\begin{align*}
\bigl( \eta_{tt}, \psi \bigr) = -\frac{1}{\mu_f} \bigl( K\nab p, \nab \psi\bigr)
\leq \frac{K}{\mu_f} \|\nab p\|_{L^2(\Ome)} \|\nab \psi\|_{L^2(\Ome)}\qquad \forall \psi\in H^1_0(\Ome),
\end{align*}
 we implies that \eqref{eq3.23} holds. The proof is complete.
\end{proof}

\begin{theorem}\label{lem2.4}
Assume that $\bu_0\in \bH^1(\Ome), \bbf \in \bL^2(\Omega),
\bbf_1 \in \bL^2(\p\Ome), p_0\in L^2(\Ome), \phi\in L^2(\Ome)$, $\phi_1\in L^2(\p\Ome)$ and Assumption \ref{assu210204-1} hold, then there exists a unique weak solution $(\bu,\xi,\eta)$ of the problem \reff{e2.4}-\reff{e2.9}.
\end{theorem}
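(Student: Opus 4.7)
The plan is to establish existence via a Galerkin approximation combined with a Minty--Browder passage to the limit for the nonlinear elastic operator, and uniqueness via the strong monotonicity \eqref{202002034} applied to the same energy identity that produced Lemma \ref{estimates}.

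To construct an approximate solution, I would fix countable bases $\{\pmb{\phi}_i\}_{i\ge 1}\subset \bH^1_\perp(\Ome)$, $\{\varphi_j\}_{j\ge 1}\subset L^2(\Ome)$ and $\{\psi_k\}_{k\ge 1}\subset H^1(\Ome)$ dense in their ambient spaces, and seek $\bu^N=\sum_{i=1}^N a_i(t)\pmb{\phi}_i$, $\xi^N=\sum_{j=1}^N b_j(t)\varphi_j$ and $\eta^N=\sum_{k=1}^N c_k(t)\psi_k$ satisfying \reff{e2.4}--\reff{e2.6} restricted to the corresponding finite-dimensional test spaces, with initial data obtained by $L^2$-projection of $\bu_0$ and $\eta_0$. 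The algebraic relation \reff{e2.5} together with the inf-sup condition \reff{e2.0a} lets me express the coefficients $b_j(t)$ in terms of $a_i(t)$ and $c_k(t)$, reducing the Galerkin system to a nonlinear ODE in $(a_i,c_k)$ whose right-hand side is continuous by the Lipschitz bound \eqref{202002033}, so Carath\'eodory's theorem supplies a local-in-time solution. The energy identity \reff{eq210121-2} goes through verbatim at the Galerkin level because every test function used there lies in the approximation subspace; this provides global existence on $[0,T]$ together with $N$-uniform bounds identical in form to those of Lemmas \ref{estimates} and \ref{smooth}, which in turn allow extraction of weakly (or weakly-$*$) convergent subsequences $\bu^N\rightharpoonup \bu$, $\xi^N\rightharpoonup \xi$, $\eta^N\rightharpoonup \eta$ in the spaces of Definition \ref{weak2}.

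The main obstacle is identifying the weak limit of the nonlinear term $(\mathcal{N}(\vepsi(\bu^N)),\vepsi(\bv))$. Since \eqref{202002033} bounds $\mathcal{N}(\vepsi(\bu^N))$ uniformly in $L^2$, a further subsequence converges weakly to some $\chi$, and it remains to show $\chi=\mathcal{N}(\vepsi(\bu))$. I would invoke the Minty--Browder trick: the monotonicity \eqref{202002034} yields $(\mathcal{N}(\vepsi(\bu^N))-\mathcal{N}(\vepsi(\bw)),\vepsi(\bu^N-\bw))\ge 0$ for any admissible $\bw$; combining the Galerkin energy identity with weak lower semicontinuity lets me pass to the limit in this inequality, and then choosing $\bw=\bu\pm\tau\bz$ and letting $\tau\downarrow 0$ identifies $\chi=\mathcal{N}(\vepsi(\bu))$. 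The remaining terms in \reff{e2.4}--\reff{e2.6} are linear and pass to the limit directly, while the initial condition is recovered from the weak continuity of $\eta$ implied by the bound on $\eta_t$.

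For uniqueness, let $(\bu_i,\xi_i,\eta_i)$, $i=1,2$, be two weak solutions sharing the same data and set $\bu:=\bu_1-\bu_2$, $\xi:=\xi_1-\xi_2$, $\eta:=\eta_1-\eta_2$, $p:=\kappa_1\xi+\kappa_2\eta$. Subtracting \reff{e2.4}--\reff{e2.6} for the two solutions and choosing, in the same manner that produced \reff{eq210121-2}, $\bv=\bu_t$ in the subtracted \reff{e2.4}, $\varphi=\xi$ after differentiating the subtracted \reff{e2.5} in $t$, and $\psi=p$ in the subtracted \reff{e2.6}, but now replacing the coercivity estimate \eqref{202002032} by the strong monotonicity \eqref{202002034}, I obtain an energy identity with vanishing source and vanishing initial data. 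This forces $\vepsi(\bu)\equiv 0$, $\xi\equiv 0$ and $\eta\equiv 0$, and Korn's inequality \reff{e4.1+0} then gives $\bu\equiv 0$ in $\bH^1_\perp(\Ome)$, completing the argument.
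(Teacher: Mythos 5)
Your existence argument via Galerkin approximation and the Minty--Browder trick is a legitimate, and in fact more standard, alternative to the paper's route. The paper simply cites Lemma~\ref{estimates}, Lemma~\ref{smooth}, and the Schauder fixed-point theorem; your constructive approach makes explicit use of the monotonicity \eqref{202002034} and the Lipschitz bound \eqref{202002033} to build the approximate solutions and identify the limit of the nonlinear stress. That part of the proposal is a genuinely different route, and either is defensible.

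The uniqueness argument, however, has a concrete gap: you test the subtracted \reff{e2.4} with $\bv = (\bu_1-\bu_2)_t$ and then claim the strong monotonicity \eqref{202002034} closes the estimate. But \eqref{202002034} controls the pairing
\[
\bigl(\mathcal{N}(\vepsi(\bu_1))-\mathcal{N}(\vepsi(\bu_2)),\,\vepsi(\bu_1)-\vepsi(\bu_2)\bigr),
\]
not
\[
\bigl(\mathcal{N}(\vepsi(\bu_1))-\mathcal{N}(\vepsi(\bu_2)),\,\vepsi((\bu_1)_t)-\vepsi((\bu_2)_t)\bigr),
\]
and for a merely Lipschitz, strongly monotone $\mathcal{N}$ (the only structure granted by Assumption~\ref{assu210204-1}) the latter has no sign and is not the time derivative of any nonnegative functional; a 1D example such as $\mathcal{N}(x)=2x+\sin x$ already shows it can be strictly negative. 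Consequently your ``energy identity with vanishing data'' does not force $\vepsi(\bu_1-\bu_2)\equiv 0$. The paper avoids this by \emph{not} differentiating in time in the uniqueness step: it takes $\bv=\bu_1-\bu_2$ in \reff{202002037} and $\varphi=\xi_1-\xi_2$ in \reff{202002038}, pointwise for a.e.\ $t$, which is exactly the pairing to which \eqref{202002034} applies, yielding
$0\le C_{mn}\|\vepsi(\bu_1-\bu_2)\|^2_{L^2}+\kappa_3\|\xi_1-\xi_2\|^2_{L^2}$
once the $\eta$-difference is removed. To repair your argument, replace $\bv=\bu_t$ and the time-differentiated \reff{e2.5} with the undifferentiated choices $\bv=\bu_1-\bu_2$, $\varphi=\xi_1-\xi_2$, and couple them with \reff{e2.6} tested against $\psi=\kappa_1(\xi_1-\xi_2)+\kappa_2(\eta_1-\eta_2)$ (together with an argument controlling $\eta_1-\eta_2$); the point is that the monotonicity must be applied to the pointwise-in-time difference, not to its time derivative.
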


\begin{proof} Using Lemma \ref{estimates}, Lemma \ref{smooth} and  Schauder fixed point theorem (cf. \cite{Evans2016}), we can deduce that the solution of \reff{e2.4}-\reff{e2.9} exists.
Taking $\psi\equiv 1$ in \reff{e2.6}, we get
\begin{equation}\label{202002036}
  C_\eta(t):=\bigl(\eta(\cdot, t),1\bigr)
=\bigl(\eta_0,1\bigr) + \bigl[ (\phi,1) + \langle \phi_1, 1\rangle \bigr] t,\quad t\geq 0,
\end{equation}
which implies that  $\eta$ is unique.
From \reff{e2.4} and \reff{e2.5}, we obtain
\begin{align}
  \bigl(\mathcal{N}(\varepsilon(\bu_{1}))-\mathcal{N}(\varepsilon(\bu_{2})), \vepsi(\bv) \bigr)-\bigl( \xi_{1}-\xi_{2}, \div \bv \bigr)
&=0   &&\quad\forall \bv\in \bH^1(\Ome), \label{202002037}\\
\kappa_3 \bigl( \xi_{1}-\xi_{2}, \varphi \bigr) +\bigl(\div\bu_{1}-\div\bu_{2}, \varphi \bigr)
&= 0  &&\quad\forall \varphi \in L^2(\Ome). \label{202002038}
\end{align}
Take  $\bv=\bu_{1}-\bu_{2}, ~\varphi=\xi_{1}-\xi_{2}$ in \reff{202002037}-\reff{202002038}, and by using \reff{202002034}, we have
\begin{equation}\label{202002039}
  0\leq C_{mn}\|\vepsi(\bu_{1})-\vepsi(\bu_{2})\|^{2}_{L^{2}}+\kappa_3\|\xi_{1}-\xi_{2}\|^{2}_{L^{2}}=0,
\end{equation}
which implies that $\bu_{1}= \bu_{2}$.

From \reff{202002039} and \reff{e2.7}, we can get that $p_{1}-p_{2}=0, q_{1}-q_{2}=0$. The proof is complete.
\end{proof}

\section{Fully discrete multiphysics finite element methods}\label{sec-3}

\subsection{Formulation of fully discrete finite element methods}\label{sec-3.2}
Let $\mathcal{T}_h$ be a
quasi-uniform triangulation or rectangular partition of $\Omega$ with maximum mesh size $h$,
and $\bar{\Omega}=\bigcup_{K\in\mathcal{T}_h}\bar{K}$. Also, let $(\bX_h, M_h)$ be a stable
mixed finite element pair, that is, $\bX_h\subset \bH^1(\Omega)$ and $M_h\subset L^2(\Omega)$
satisfy the inf-sup condition
\begin{alignat}{2}\label{1407-1}
\sup_{v_h\in \bX_h}\frac{({\rm div} v_h, \varphi_h)}{\|v_h\|_{H^1(\Ome)}}
\geq \beta_0\|\varphi_h\|_{L^2(\Ome)} &&\quad \forall\varphi_h\in M_{0h}:=M_h\cap L_0^2(\Omega),\ \beta_0>0.
\end{alignat}

A number of stable mixed finite element spaces $(\bX_h, M_h)$ have been known in the literature
\cite{brezzi}.  A well-known example is the following
so-called Taylor-Hood element (cf. \cite{ber,brezzi}):
\begin{align*}
\bX_h &=\bigl\{\bv_h\in \bC^0(\overline{\Ome});\,
\bv_h|_K\in \bP_2(K)~~\forall K\in \cT_h \bigr\}, \\
M_h &=\bigl\{\varphi_h\in C^0(\overline{\Ome});\, \varphi_h|_K\in P_1(K)
~~\forall K\in \cT_h \bigr\}.
\end{align*}

Finite element approximation space $W_h$ for $\eta$ variable can be chosen
independently, any piece-wise polynomial space is acceptable provided that
$W_h \supset M_h$. 
The most convenient choice is $W_h =M_h$, which
will be adopted in the remainder of this paper.

We define
\begin{equation}\label{e3.50}
\bV_h=\bigl\{\bv_h\in \bX_h;\,  (\bv_h,\br)=0\,\,
\forall \br\in \bRM \bigr\}.
\end{equation}
It is easy to check that $\bX_h=\bV_h\bigoplus \bRM$. It was proved in \cite{fh10}
that there holds the following alternative version of the above inf-sup condition:
\begin{align}\label{e3.51}
\sup_{\bv_h\in \bV_h}\frac{(\div\bv_h,\varphi_h)}{\norm{\bv_h}{H^1(\Ome)}}
\geq \beta_1 \norm{\varphi_h}{L^2(\Ome)} \quad \forall \varphi_h\in M_{0h}, \quad \beta_1>0.
\end{align}

\begin{algorithm}\label{alg210206-1} The fully discrete  multiphysics finite element algorithm for the nonlinear poroelasticity.
\begin{itemize}
\item[(i)]
Compute $\bu^0_h\in \bV_h$ and $q^0_h\in W_h$ by $
\bu^0_h =\bu_0, p^0_h =p_0$.

\item[(ii)] For $n=0,1,2, \cdots$,  do the following two steps.
\end{itemize}

{\em Step 1:} Get $(\bu^{n+1}_h,\xi^{n+1}_h, \eta^{n+1}_h)\in \bV_h\times M_h \times  W_h$ from
\begin{eqnarray}\label{e3.1}
&&~~~~\bigl(\mathcal{N}(\varepsilon(\bu^{n+1}_h)), \vepsi(\bv_h) \bigr)-\bigl( \xi^{n+1}_h, \div \bv_h \bigr)
= (\bbf, \bv_h)+\langle \bbf_1,\bv_h\rangle \quad \forall \bv_h\in \bV_h, \\
&&~~~~\kappa_3\bigl(\xi^{n+1}_h, \varphi_h \bigr) +\bigl(\div\bu^{n+1}_h, \varphi_h \bigr)
=\kappa_1\bigl( \eta^{n+\theta}_h, \varphi_h \bigr)
~~~~~~~~~~~~~~~\quad \forall \varphi_h \in M_h, \label{e3.2} \\
\label{e3.4}
&&~~~~\bigl(d_t\eta^{n+1}_h, \psi_h \bigr)
+\frac{1}{\mu_f} \bigl(K(\nab (\kappa_1\xi^{n+1}_h +\kappa_2\eta^{n+1}_h) \\
&&\hskip 1in
-\rho_f\bg,\nab\psi_h \bigr)=(\phi, \psi_h)+\langle \phi_1,\psi_h\rangle, ~~~~~~~~~~~~~\quad  \forall \psi_h\in W_h,  \no
\end{eqnarray}
where $\theta=0$ or $1$.

{\em Step 2:} Update $p^{n+1}_h$ and $q^{n+1}_h$ by
\begin{eqnarray}
p^{n+1}_h=\kappa_1\xi^{n+1}_h +\kappa_2\eta^{n+\theta}_h, \quad
q^{n+1}_h=\kappa_1\eta^{n+1}_h-\kappa_3\xi^{n+1}_h.\label{e3.5}
\end{eqnarray}
\end{algorithm}
%

\begin{remark}\label{rem3.1}
(a) In Step 1, the nonlinear equations of \reff{e3.1}-\reff{e3.2} can be solved by the Newton iterative method as follows:
%
%
%
%
 \begin{align}\label{20201219}
    (\mathcal{N}_{h}(\varepsilon(\mathbf{u}^{n+1,i+1}_h)),\varepsilon(\mathbf{v}_{h}))-(\xi_{h}^{n+1,i+1},\mathrm{div}\bv_{h})
    =(\bbf, \bv_h)+\langle \bbf_1,\bv_h\rangle,
 \end{align}
   \begin{equation}\label{20201220}
   \kappa_3\bigl(\xi^{n+1,i+1}_h, \varphi_h \bigr) +\bigl(\div\bu^{n+1,i+1}_h, \varphi_h \bigr)
           =\kappa_1\bigl( \eta^{n+\theta,i+1}_h, \varphi_h \bigr),
   \end{equation}
%
%
%
where	$\bigl(\mathcal{N}_{h}(\varepsilon(\bu^{n+1,i+1}_h)),  \vepsi(\bv_h)\bigr)=\bigl(\mathcal{N}(\varepsilon(\bu^{n+1,i}_h))+D\mathcal{N}(\varepsilon(\bu^{n+1,i}_h))\cdot\Delta\varepsilon(\bu^{n+1,i+1}_h)), \vepsi(\bv_h) \bigr)$.


(b) If $\theta=0$,  Step 1 consists of two decoupled sub-problems which can be solved
independently. On the other hand, if $\theta=1$, Algorithm \ref{alg210206-1} is a coupled system.
\end{remark}

\subsection{Existence and uniqueness of numerical solution}\label{sec-3.3}

Taking the similar argument of \cite{BOFFI2019}, we obtain the following result of existence and uniqueness.
\begin{theorem}\label{weak11}
The numerical solution $\{({\mathbf{u}_{h}^{n}},\ {\xi_{h}^{n}},\ {\eta_{h}^{n}})\}_{n\geq0}$ of the fully discrete  multiphysics finite element algorithm \reff{e3.1}-\reff{e3.4} exists uniquely.
\end{theorem}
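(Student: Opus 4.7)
The plan is to proceed by induction on the time index $n$. The base case is immediate, since $\bu^0_h$ and $p^0_h$ are prescribed. For the inductive step, assuming $(\bu^n_h, \xi^n_h, \eta^n_h)$ is known, I must produce a unique $(\bu^{n+1}_h, \xi^{n+1}_h, \eta^{n+1}_h) \in \bV_h \times M_h \times W_h$ satisfying \eqref{e3.1}--\eqref{e3.4}. Since these spaces are finite-dimensional, existence will reduce to a continuity-plus-coercivity argument once uniqueness is in hand, so I will handle uniqueness first.

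For uniqueness, suppose there are two solutions $(\bu^i, \xi^i, \eta^i)$, $i=1,2$, at level $n+1$, and let $(\tilde\bu, \tilde\xi, \tilde\eta)$ denote the differences. Subtracting the two copies of \eqref{e3.1} and testing with $\bv_h = \tilde\bu$, subtracting \eqref{e3.2} and testing with $\varphi_h = \tilde\xi$, and subtracting \eqref{e3.4} (multiplied by $\Delta t$) tested against $\psi_h = \kappa_1\tilde\xi + \kappa_2\tilde\eta$ (admissible because $W_h \supset M_h$), the mixed pairing $(\tilde\xi, \div\tilde\bu)$ cancels between the first two identities while $\kappa_1(\tilde\eta, \tilde\xi)$ cancels between the second and third. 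In the coupled case $\theta = 1$, adding the resulting three identities yields
\begin{align*}
\bigl(\mathcal{N}(\varepsilon(\bu^1)) - \mathcal{N}(\varepsilon(\bu^2)), \varepsilon(\tilde\bu)\bigr) + \kappa_3\|\tilde\xi\|_{L^2}^2 + \kappa_2\|\tilde\eta\|_{L^2}^2 + \frac{\Delta t}{\mu_f}\bigl\|K^{1/2}\nab(\kappa_1\tilde\xi + \kappa_2\tilde\eta)\bigr\|_{L^2}^2 = 0.
\end{align*}
By the strong monotonicity \eqref{202002034} the first term is bounded below by $C_{mn}\|\varepsilon(\tilde\bu)\|_{L^2}^2$, and combined with Korn's inequality \eqref{e4.1+0} on $\bV_h \subset \bH^1_\perp(\Ome)$, this forces $\tilde\bu = 0$, $\tilde\xi = 0$, $\tilde\eta = 0$. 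The decoupled case $\theta = 0$ is strictly easier since \eqref{e3.2} no longer feeds $\tilde\eta$ into the saddle-point subproblem, so $\tilde\bu$ and $\tilde\xi$ vanish immediately and $\tilde\eta$ vanishes from the purely parabolic residual.

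For existence, when $\theta = 0$ Step 1 splits into a nonlinear Stokes-type problem for $(\bu^{n+1}_h, \xi^{n+1}_h)$, whose solvability on $(\bV_h, M_h)$ follows from the Browder--Minty theorem applied to the strongly monotone, Lipschitz operator induced by $\mathcal{N}$ together with the discrete inf-sup condition \eqref{e3.51}, and a linear coercive elliptic problem for $\eta^{n+1}_h$ solvable by Lax--Milgram. For $\theta = 1$, I view \eqref{e3.1}--\eqref{e3.4} as a single nonlinear map $A$ on the finite-dimensional Hilbert space $\bV_h \times M_h \times W_h$; repeating the uniqueness bookkeeping with $(\bu, \xi, \eta)$ replaced by itself and $(\bbf, \bbf_1, \phi, \phi_1)$ reinstated on the right-hand side supplies a coercivity bound of the form $\langle A(z), \Phi(z)\rangle \geq c\|z\|^2 - C$ for the asymmetric test map $\Phi(\bu,\xi,\eta)=(\bu,\xi,\kappa_1\xi+\kappa_2\eta)$, and Brouwer's fixed point theorem then yields a solution. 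The main subtlety is exactly this asymmetric choice of test functions, forced by the pseudo-pressure relation $p = \kappa_1\xi + \kappa_2\eta$; once it is made, the algebraic cancellations in the multiphysics reformulation do all the heavy lifting.
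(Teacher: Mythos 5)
Your proposal is correct, but it takes a genuinely different route from the paper. The paper's proof never touches the time index and instead linearizes the elliptic nonlinearity: it introduces the surrogate bilinear form $a_h^{lin}(\cdot,\cdot)=C_{cv}(\varepsilon(\cdot),\varepsilon(\cdot))$, solves the associated \emph{linear} auxiliary saddle-point/diffusion system (citing \cite{X.B.Feng2014} for its well-posedness), and then constructs a map $\Phi_h\colon \bV_h\to\bV_h$ defined by $a_h^{lin}(\Phi_h(\bv_h),\bw_h)=(\mathcal{N}(\varepsilon(\bv_h)),\varepsilon(\bw_h))$; injectivity of $\Phi_h$ comes from the strong monotonicity \eqref{202002034}, surjectivity from a growth criterion on Euclidean spaces (Deimling), and the conclusion is transferred back through the bijection. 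Your argument instead attacks the nonlinear single-step problem directly: uniqueness by the energy identity obtained from the test triple $(\tilde\bu,\tilde\xi,\kappa_1\tilde\xi+\kappa_2\tilde\eta)$, which exposes the telescoping cancellations of the multiphysics structure exactly as in Lemma~\ref{lma3.3}; existence by Browder--Minty plus Lax--Milgram when $\theta=0$, and by the Brouwer/acute-angle lemma applied to the composed operator $\Phi^{T}\!\circ A$ when $\theta=1$. Both proofs ultimately rest on the same monotonicity hypothesis, and the surjectivity criterion the paper cites from Deimling is itself a Brouwer-type result, so the toolboxes overlap at the bottom; but your route is more self-contained, makes the dependence on the discrete inf-sup condition \eqref{e3.51} and on Korn's inequality on $\bV_h\subset\bH^1_\perp(\Ome)$ explicit, and avoids the paper's extra step of matching the nonlinear solution to the linear auxiliary one through $\Phi_h$. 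One small caution: when invoking Brouwer for $\theta=1$, be sure to state that you apply the zero-of-a-vector-field lemma to $\Phi^{T}\!\circ A$ (with $\Phi$ your asymmetric test isomorphism), since the coercivity bound you derive is $\langle A(z),\Phi(z)\rangle$ rather than $\langle A(z),z\rangle$; you flag this as "the main subtlety," and indeed it is the one point that would need to be spelled out.
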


\begin{proof}
Denote $a_{h}(\mathbf{u}_{h}^{n},\mathbf{v}_{h})=(\mathcal{N}(\varepsilon(\mathbf{u}_{h}^{n})),\varepsilon(\mathbf{v}_{h}))$, also define the following bilinear form
$a_{h}^{lin}(\mathbf{u}_{h}^{n},\mathbf{v}_{h})$:
\begin{equation}
	a_{h}^{lin}(\mathbf{u}_{h}^{n},\mathbf{v}_{h})=C_{cv}(\varepsilon(\mathbf{u}_{h}^{n}), \varepsilon(\mathbf{v}_{h})), \forall \mathbf{v}_{h}\in V_{h}.\label{eq210205-1}
\end{equation}

We consider the following linear auxiliary problem: find $(y_{h}^{n+1},\ {\xi_{h}^{n+1}},\ {\eta_{h}^{n+\theta}})\in V_{h}\times M_{h}\times M_{h}$ satisfy that
\begin{eqnarray}
&& a_{h}^{lin}(\mathbf{y}_{h}^{n+1},\mathbf{v}_{h})
-( \xi^{n+1}_h, {\rm div} \mathbf{v}_h )
 =({\textbf{f}},{\textbf{v}_{h}})
 +\langle{\textbf{f}_{1}},{\textbf{v}_h}\rangle,\quad\forall{\textbf{v}_{h}}\in{V_{h}},\nonumber\\
 &&\kappa_{3}(\xi_{h}^{n+1},{\varphi_{h}})
 +({\rm div} \mathbf{y}_{h}^{n+1},{\varphi_{h}})=
k_{1}(\eta_{h}^{n+\theta},{\varphi_{h}}),\quad\forall \varphi_{h}\in M_{h},\label{202003041}\\
&&({d_{t}\eta_{h}^{n+1}},\psi_{h})+\frac{1}{\mu_{f}}({K(\nabla(\kappa_{1}\xi_{h}^{n+1} +\kappa_{2}\eta_{h}^{n+1})-{\rho_{f}}\mathbf{g})},{\nabla\psi_{h}})\nonumber\\
&&=(\phi,\psi_{h})+\langle{\phi_{1}},\psi_{h}\rangle,\quad\forall \psi_{h}\in M_{h}\nonumber
\end{eqnarray}
with the boundary conditions of \eqref{20150712_1}- \eqref{20150712_3}.

Following the method of \cite{X.B.Feng2014}, we can prove that the solution of \reff{202003041} uniquely exists, here we omit the details of the proof.

Next, we define a mapping~$\Phi_{h}: V_{h}\rightarrow V_{h}$ such that
\begin{equation}\label{202003044}
  a_{h}^{lin}(\Phi_{h}(\mathbf{v}_{h}^{n+1}),\mathbf{w}_{h})= a_{h}(\mathbf{v}_{h}^{n+1},\mathbf{w}_{h}),\quad\forall \mathbf{w}_{h}\in V_{h}.
\end{equation}

Next, we will show that $\Phi_{h}$ is an isomorphism by two steps:

(i) Prove that $\Phi_{h}$ is a injective mapping. Assume that $\mathbf{v}_{h,1}^{n+1}\neq\mathbf{v}_{h,2}^{n+1}\in V_{h}$ satisfy $\Phi_{h}(\mathbf{v}_{h,1}^{n+1})=\Phi_{h}(\mathbf{v}_{h,2}^{n+1})$. Using \reff{202002034}, we have
\begin{eqnarray}\label{202003045}
  &&0\leq a_{h}(\mathbf{v}_{h,1}^{n+1},\mathbf{v}_{h,1}^{n+1}-\mathbf{v}_{h,2}^{n+1})
  -a_{h}(\mathbf{v}_{h,2}^{n+1},\mathbf{v}_{h,1}^{n+1}-\mathbf{v}_{h,2}^{n+1})\\
  &&~~=a_{h}^{lin}(\Phi_{h}(\mathbf{v}_{h,1}^{n+1})-\Phi_{h}(\mathbf{v}_{h,2}^{n+1}),\mathbf{v}_{h,1}^{n+1}-\mathbf{v}_{h,2}^{n+1})\nonumber\\
   &&~~=0,\nonumber
\end{eqnarray}
which implies that  $\mathbf{v}_{h,1}^{n+1}=\mathbf{v}_{h,2}^{n+1}$. Thus, we see that the above hypothesis does not hold, so\ $\Phi_{h}$ is a injective mapping.

(ii) Prove that $\Phi_{h}$ is surjective. From \cite{Deimling}, we know that $\Psi$ is surjective if\ $(E,(\cdot ,\cdot)_{E})$ is a \ Euclidean space and\ $\Psi: E\rightarrow E$ is a continuous mapping such that\ $\frac{(\Psi(x) ,x)_{E}}{\|x\|_{E}}\rightarrow +\infty$ as\ $\|x\|_{E}\rightarrow +\infty$. Since
\ $(V_{h},a_{h}^{lin}(\cdot ,\cdot))$ is a\ Euclidean space, from the definition of\ $a_{h}^{lin}$ and~\eqref{202002032} yields:
\begin{equation}\label{202003046}
  a_{h}^{lin}(\Phi_{h}(\mathbf{v}_{h}^{n+1}),\mathbf{v}_{h}^{n+1})\geq a_{h}^{lin}(\mathbf{v}_{h}^{n+1},\mathbf{v}_{h}^{n+1}),~\forall \mathbf{v}_{h}^{n+1}\in V_{h}.
\end{equation}

From (i) and (ii), we claim that  $\Phi_{h}$ is an isomorphism. Therefore, there exists a unique  $\mathbf{u}_{h}^{n+1}\in V_{h}$ such that $\Phi_{h}(\mathbf{u}_{h}^{n+1})=y_{h}^{n+1}$. Due to the uniqueness of  $y_{h}^{n+1}$, we see that  $(\mathbf{u}_{h}^{n+1}, {\xi_{h}^{n+1}},  {\eta_{h}^{n+\theta}})$ is the unique solution of the discrete problem \reff{e3.1}-\reff{e3.4}. The proof is finished.
\end{proof}

\subsection{Convergence analysis}\label{sec-3.4}
To derive the error estimates, we need to list
some basic results, one can refer to \cite{bs08, brezzi,cia}.
We firstly recall the following inverse inequality:
\begin{alignat}{2}\label{e3.13-00}
\|\nabla\varphi_h\|_{L^2(K)}\leq c_3h^{-1} \|\varphi_h\|_{L^2(K)}
\qquad \forall\varphi_h\in P_r(K), K\in T_h.
\end{alignat}

For any $\varphi\in L^2(\Omega)$, we define its $L^2$-projection $\mathcal{Q}_h: L^2\rightarrow W_h$ as
\begin{align}\label{e3.13-01}
\bigl( \mathcal{Q}_h\varphi, \psi_h  \bigr)=\bigl( \varphi, \psi_h  \bigr) \qquad \psi_h\in W_h.
\end{align}

It is well known that the projection operator $\mathcal{Q}_h: L^2\rightarrow W_h$ satisfies
that for any $\varphi\in H^s(\Omega) (s\geq1)$ \cite{bs08},
\begin{align}\label{e3.13-02}
\|\mathcal{Q}_h\varphi-\varphi\|_{L^2(\Ome)}+h\| \nabla(\mathcal{Q}_h\varphi
-\varphi) \|_{L^2(\Ome)}\leq Ch^\ell\|\varphi\|_{H^\ell(\Ome)}, \quad \ell=\min\{2, s\}.
\end{align}

We would like to point out that if $W_h\not\subset H^1(\Omega)$, the second term on the left-hand side
of \reff{e3.13-02} has to be replaced by the broken $H^1$-norm.

Next, for any $\varphi\in H^1(\Omega)$, we define its elliptic projection $\mathcal{S}_h\varphi$ by
\begin{alignat}{2}\label{e3.13-03}
\bigl(K\nabla\mathcal{S}_h\varphi, \nabla\varphi_h\bigr) &=\bigl(K\nabla\varphi, \nabla\varphi_h\bigr)
&& \qquad \forall \varphi_h\in W_h,\\
\bigl(\mathcal{S}_h\varphi, 1\bigr) &=\bigl(\varphi, 1\bigr).&&\label{e3.13-04}
\end{alignat}

It is well known that the projection operator $\mathcal{S}_h: H^1(\Omega)\rightarrow W_h$
satisfies that for any $\varphi\in H^s(\Omega) (s>1)$ \cite{bs08}, there holds
\begin{align}\label{e3.13-05}
\|\mathcal{S}_h\varphi-\varphi\|_{L^2(\Ome)}+h\| \nabla(\mathcal{S}_h\varphi-\varphi) \|_{L^2(\Ome)}
\leq Ch^\ell\|\varphi\|_{H^\ell(\Ome)}, \quad \ell=\min\{2, s\}.
\end{align}

Finally, for any $\bv\in \bH^1_\perp(\Omega)$, we define its elliptic projection $\mathcal{R}_h\bv$ by
\begin{alignat}{2}\label{e3.13-06}
\bigl(\varepsilon(\mathcal{R}_h\bv), \varepsilon(\bw_h)\bigr)
=\bigl(\varepsilon(\bv), \varepsilon(\bw_h)\bigr) \quad \bw_h\in \bV_h,
\end{alignat}

It is easy to show that the projection $\mathcal{R}_h\bv$ satisfies (cf. \cite{bs08}) that for any
$\bv\in \bH^1_\perp(\Omega)\cap \bH^s(\Omega) (s>1)$, there holds
\begin{eqnarray}\label{e3.13-07}
\qquad \|\mathcal{R}_h\bv-\bv\|_{L^2(\Ome)}+h\| \nabla(\mathcal{R}_h\bv-\bv) \|_{L^2(\Ome)}
\leq Ch^m\|\bv\|_{H^m(\Ome)}, m=\min\{3, s\}.
\end{eqnarray}

\begin{lemma} There exists a positive constant $C_{lp}$ such that
\begin{equation}\label{202003221}
\|  \mathcal{N}(\varepsilon(\mathbf{v}))
-\mathcal{N}(\varepsilon(\mathcal{R}_h(\mathbf{v})))\|_{L^{2}(\Omega)}\leq C_{lp}h^{m-1}\|\mathbf{v}\|_{H^{m}(\Omega)}, \quad m=\min\{3, s\}.
\end{equation}
\end{lemma}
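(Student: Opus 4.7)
The proof is essentially an immediate composition of two facts already available in the paper, so my plan is short and linear.

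First, I would invoke the Lipschitz estimate for $\mathcal{N}$ proved in Lemma~\ref{weak3}, namely inequality \eqref{202002033}, applied with $\mathbf{u}=\mathbf{v}$ and $\mathbf{v}=\mathcal{R}_h\mathbf{v}$, giving
\begin{equation*}
\|\mathcal{N}(\varepsilon(\mathbf{v}))-\mathcal{N}(\varepsilon(\mathcal{R}_h\mathbf{v}))\|_{L^2(\Omega)}
\le C_{lp}\,\|\varepsilon(\mathbf{v})-\varepsilon(\mathcal{R}_h\mathbf{v})\|_{L^2(\Omega)}.
\end{equation*}
This reduces the claim to an approximation estimate for $\varepsilon(\mathbf{v}-\mathcal{R}_h\mathbf{v})$ in $L^2$.

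Next, using the pointwise bound $|\varepsilon(\mathbf{w})|\le|\nabla\mathbf{w}|$ (which holds up to a universal constant independent of $h$ and $\mathbf{v}$), I would estimate
\begin{equation*}
\|\varepsilon(\mathbf{v}-\mathcal{R}_h\mathbf{v})\|_{L^2(\Omega)}
\le C\,\|\nabla(\mathbf{v}-\mathcal{R}_h\mathbf{v})\|_{L^2(\Omega)}.
\end{equation*}
Then the standard approximation property \eqref{e3.13-07} of the elliptic projection $\mathcal{R}_h$ yields
\begin{equation*}
\|\nabla(\mathbf{v}-\mathcal{R}_h\mathbf{v})\|_{L^2(\Omega)}
\le C\,h^{m-1}\|\mathbf{v}\|_{H^m(\Omega)},\qquad m=\min\{3,s\}.
\end{equation*}
Chaining the three inequalities and absorbing the absolute constants into $C_{lp}$ completes the proof.

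I do not anticipate any real obstacle: the only subtle point is being careful that the Lipschitz constant from \eqref{202002033} is the same $C_{lp}=C_l+\sqrt{2}\lambda$ that appears in the statement, and that the elliptic projection bound is applied at the optimal order $m-1$ for the $H^1$-seminorm (which requires $\mathbf{v}\in \mathbf{H}^m$ with $m\ge 2$, consistent with $m=\min\{3,s\}$ for $s>1$). Everything else is a direct substitution.
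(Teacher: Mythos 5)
Your proof follows exactly the same three-step chain as the paper: apply the Lipschitz estimate \eqref{202002033} from Lemma~\ref{weak3}, bound $\|\varepsilon(\cdot)\|_{L^2}$ by $\|\nabla(\cdot)\|_{L^2}$, then invoke the elliptic projection approximation property \eqref{e3.13-07}. Correct, and identical in approach to the paper's proof.
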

\begin{proof}
Using  \reff{202002033}, \reff{e3.13-07} and the triangle inequality, we have
\begin{align}
\|  \mathcal{N}(\varepsilon(\mathbf{v}))
-\mathcal{N}(\varepsilon(\mathcal{R}_h(\mathbf{v})))\|_{L^{2}(\Omega)}
&\leq C_{lp}\| \varepsilon(\mathbf{v})
-\varepsilon(\mathcal{R}_h(\mathbf{v}))\|_{L^{2}(\Omega)} \no \\
&=C_{lp}\| \varepsilon(\mathbf{v}-\mathcal{R}_h(\mathbf{v}))\|_{L^{2}(\Omega)} \no \\
&\leq CC_{lp}\|  \nabla(\mathbf{v}-\mathcal{R}_h(\mathbf{v}))\|_{L^{2}(\Omega)} \no \\
&\leq  C_{lp}h^{m-1}\|\mathbf{v}\|_{H^{m}(\Omega)},\no
\end{align}
which implies that \reff{202003221} holds. The proof is complete.
\end{proof}

%

Also, we introduce the following symbols
\begin{align*}
	E_{\bu}^n &=\bu(t_n)-\mathcal{R}_h(\bu(t_n))+\mathcal{R}_h(\bu(t_n))-\bu_h^n
	:=\Lambda_{\bu}^n+\Theta_{\bu}^n,\\ 
	E_\xi^n &=\xi(t_n)-\mathcal{S}_h(\xi(t_n)) +\mathcal{S}_h(\xi(t_n))-\xi_h^n
	:=\Lambda_{\xi }^n+\Theta_{\xi }^n,\\
	E_\xi^n &=\xi(t_n)-\mathcal{Q}_h(\xi(t_n)) +\mathcal{Q}_h(\xi(t_n))-\xi_h^n
	:=\Pi_{\xi }^n+\Phi_{\xi }^n,\\
	E_\eta^n &=\eta(t_n)-\mathcal{S}_h(\eta(t_n))+\mathcal{S}_h(\eta(t_n))-\eta_h^n
	:=\Lambda_{\eta }^n+\Theta_{\eta }^n,\\
	E_\eta^n &=\eta(t_n)-\mathcal{Q}_h(\eta(t_n))+\mathcal{Q}_h(\eta(t_n))-\eta_h^n
	:=\Pi_{\eta }^n+\Phi_{\eta }^n,\\
	E_p^n &=p(t_n)-\mathcal{S}_h(p(t_n))+\mathcal{S}_h(p(t_n))-p_h^n
	:=\Lambda_{p }^n+\Theta_{p }^n.\\
	E_p^n &=p(t_n)-\mathcal{Q}_h(p(t_n))+\mathcal{Q}_h(p(t_n))-p_h^n
	:=\Pi_{p }^n+\Phi_{p }^n.
\end{align*}
\begin{lemma}\label{lma3.3}
Assume that $\{ (\bu_h^n, \xi_h^n, \eta_h^n) \}_{n\geq0}$ is generated by Algorithm \ref{alg210206-1} and
$\Lambda_{\bu}^n, \Theta_{\bu}^n, \Lambda_{\xi }^n, \Theta_{\xi }^n, \Lambda_{\eta }^n$
and $\Theta_{\eta }^n$ are defined as above. Then there holds the following estimate:
\begin{align}\label{e3.15-00}
&\mathcal{E}_h^\ell +\Delta t\sum_{n=1}^\ell \Bigl[\frac{1}{\mu_f}
\bigl( K\nab\hat{\Theta}_p^{n+1}-K\rho_f\bg,\nab\hat{\Theta}_p^{n+1}\bigr)\\
&\qquad
+\kappa_2\| d_t\Phi_\eta^{n+\theta} \|_{L^2(\Ome)}^2
+\kappa_3\| d_t\Phi_\xi^{n+1} \|_{L^2(\Ome)}^2 \Bigr) \Bigr] \no\\
&\leq\mathcal{E}_h^0+\Delta t\sum_{n=1}^\ell \Bigl[\bigl( \Pi_\xi^{n+1}, \div  \Theta_{\bu}^{n+1}\bigr)
-\bigl( \div d_t \Lambda_{\bu}^{n+1}, \Phi_\xi^{n+1} \bigr) \Bigr]\no\\
&\qquad
+\Delta t\sum_{n=1}^\ell
\Bigl[\bigl(\Phi_\xi^{n+1}, \div  \Theta_{\bu}^{n+1}\bigr)
-\bigl( \div d_t \Theta_u^{n+1}, \Phi_\xi^{n+1} \bigr) \Bigr] \no\\
&\qquad
+(1-\theta)(\Delta t)^2\sum_{n=1}^\ell\bigl( d_t^2 \eta_h(t_{n+1}), \Phi_\xi^{n+1} \bigr)
+\Delta t\sum_{n=1}^\ell \bigl( R_h^{n+1}, \hat{\Theta}_p^{n+1} \bigr)  \no\\
&\qquad
+\Delta t\sum_{n=1}^\ell\bigl(-\mathcal{N}(\varepsilon(\mathbf{u}(t_{n+1})))
+\mathcal{N}(\varepsilon(\mathcal{R}_h(\mathbf{u}(t_{n+1})))),\varepsilon(d_{t}\Theta^{n+1}_\mathbf{u})\bigr)\no\\
&\qquad
+(1-\theta)(\Delta t)^2 \sum_{n=1}^\ell \frac{\kappa_1}{\mu_f} \bigl( Kd_t\nabla\Theta_{\xi}^{n+1},
\nabla\hat{\Theta}_p^{n+1} \bigr),\no
\end{align}
where
\begin{alignat}{2}\label{e3.15-01}
\hat{\Theta}_p^{n+1} &:=\kappa_1\Theta_{\xi}^{n+1}+\kappa_2\Theta_{\eta}^{n+\theta},\\
\mathcal{E}_h^\ell &:=\frac{1}{2} \Bigl[ 2\Delta tC_{mn}\|\varepsilon(\Theta_{\mathbf{u}}^{l+1})\|_{L^{2}(\Omega)}^{2}+\kappa_2\|\Phi_\eta^{\ell+\theta} \|_{L^2(\Ome)}^2+\kappa_3\|\Phi_\xi^{\ell+1}\|_{L^2(\Ome)}^2 \Bigr],\label{e3.15-01-0}\\
R_h^{n+1} &:=-\frac{1}{\Delta t}\int_{t_{n}}^{t_{n+1}}(s-t_{n})\eta_{tt}(s)\,ds.\label{e3.15-02}
\end{alignat}
\end{lemma}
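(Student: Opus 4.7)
The plan is to derive error equations at time level $t_{n+1}$, test them against carefully chosen functions, apply the monotonicity of $\mathcal{N}$ to absorb the dominant elastic term, and then sum in time with a summation-by-parts argument. First, I would subtract the discrete scheme \eqref{e3.1}--\eqref{e3.4} from the weak formulation \eqref{e2.4}--\eqref{e2.6} evaluated at $t_{n+1}$ and apply backward Euler to $\eta_t$ to produce the truncation remainder $R_h^{n+1}$ defined in \eqref{e3.15-02}. The total error $E_\bu^{n+1}$ is split as $\Lambda_{\bu}^{n+1}+\Theta_{\bu}^{n+1}$ using the elliptic projection $\mathcal{R}_h$, while $E_\xi^{n+1}$ and $E_\eta^{n+1}$ are split simultaneously using both $\mathcal{Q}_h$ ($\Pi,\Phi$ splitting) and $\mathcal{S}_h$ ($\Lambda,\Theta$ splitting), the choice dictated by whether the term is multiplied by $\div\bv_h$ or $\nabla\psi_h$. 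The orthogonality identities \eqref{e3.13-01}, \eqref{e3.13-03}--\eqref{e3.13-04}, and \eqref{e3.13-06} annihilate the projection parts in the corresponding bilinear forms, leaving only the $\Theta$/$\Phi$ variables and easy-to-bound projection residuals on the right.

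Next I would test the three resulting error equations with $\bv_h=d_t\Theta_{\bu}^{n+1}$, $\varphi_h=d_t\Phi_{\xi}^{n+1}$, and $\psi_h=\hat{\Theta}_p^{n+1}=\kappa_1\Theta_\xi^{n+1}+\kappa_2\Theta_\eta^{n+\theta}$, then multiply by $\Delta t$ and sum on $n=1,\dots,\ell$. The key monotonicity step is to decompose
\begin{align*}
\bigl(\mathcal{N}(\varepsilon(\bu(t_{n+1})))-\mathcal{N}(\varepsilon(\bu_h^{n+1})),\varepsilon(d_t\Theta_{\bu}^{n+1})\bigr)
\end{align*}
into the projection piece, which is moved to the right-hand side as the fifth sum in \eqref{e3.15-00}, and the discrete piece $\bigl(\mathcal{N}(\varepsilon(\mathcal{R}_h\bu(t_{n+1})))-\mathcal{N}(\varepsilon(\bu_h^{n+1})),\varepsilon(d_t\Theta_{\bu}^{n+1})\bigr)$, whose telescoped sum is bounded below by $C_{mn}\|\varepsilon(\Theta_{\bu}^{\ell+1})\|_{L^2(\Omega)}^2$ via \eqref{202002034} applied along the discrete trajectory. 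After the test, the coupling pairings $(\Phi_\xi^{n+1},\div d_t\Theta_\bu^{n+1})$ and $(\div\Theta_\bu^{n+1},d_t\Phi_\xi^{n+1})$ telescope to give the boundary terms $(\Phi_\xi^{n+1},\div\Theta_\bu^{n+1})$ and, together with the splitting $E_\xi=\Pi_\xi+\Phi_\xi=\Lambda_\xi+\Theta_\xi$, produce the first two sums on the right of \eqref{e3.15-00}.

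For the diffusion equation tested by $\hat{\Theta}_p^{n+1}$, I would use the identity $d_t p_h^{n+1}=\kappa_1 d_t\xi_h^{n+1}+\kappa_2 d_t\eta_h^{n+\theta}$ together with \eqref{e3.2} written at consecutive time levels to rewrite $(d_t\eta_h^{n+1},\hat{\Theta}_p^{n+1})$ in terms of $(d_t\Phi_\eta^{n+\theta},\cdot)$ and $(d_t\Phi_\xi^{n+1},\cdot)$, yielding the $\kappa_2\|d_t\Phi_\eta^{n+\theta}\|^2$ and $\kappa_3\|d_t\Phi_\xi^{n+1}\|^2$ dissipation terms on the left. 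The $K$-weighted gradient pairing becomes $\frac{1}{\mu_f}(K\nabla\hat{\Theta}_p^{n+1},\nabla\hat{\Theta}_p^{n+1})$ after invoking \eqref{e3.13-03}, and the gravity residual produces the $-K\rho_f\bg$ contribution. The Taylor identity $d_t\eta(t_{n+1})=\eta_t(t_{n+1})+R_h^{n+1}$ accounts for the backward-Euler truncation residual.

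Finally, I would treat the $\theta$-dependent terms. Because the algorithm evaluates $\eta_h$ at level $n+\theta$ in \eqref{e3.2}, writing $\eta_h^{n+\theta}=\eta_h^{n+1}-(1-\theta)\Delta t\,d_t\eta_h^{n+1}$ and $\Theta_\eta^{n+\theta}=\Theta_\eta^{n+1}-(1-\theta)\Delta t\,d_t\Theta_\eta^{n+1}$ introduces precisely the $(1-\theta)(\Delta t)^2$ remainders $(d_t^2\eta_h(t_{n+1}),\Phi_\xi^{n+1})$ and $\frac{\kappa_1}{\mu_f}(Kd_t\nabla\Theta_\xi^{n+1},\nabla\hat{\Theta}_p^{n+1})$ that appear as the third and last sums on the right of \eqref{e3.15-00}. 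Collecting all contributions and dividing by two yields the stated inequality. I expect the main obstacle to be the careful bookkeeping of the mixed $\Pi/\Phi$ and $\Lambda/\Theta$ splittings for $\xi$ and $\eta$ — choosing the right projection in each bilinear pairing so that the projection errors appear only where they can later be controlled by \eqref{e3.13-02}, \eqref{e3.13-05}, and \eqref{202003221} — and simultaneously handling the nonlinearity through \eqref{202002034} so that the monotonicity term telescopes to produce $\Delta t C_{mn}\|\varepsilon(\Theta_\bu^{\ell+1})\|^2$ at the final step rather than a summed dissipation.
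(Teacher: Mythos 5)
Your overall strategy mirrors the paper's: subtract the discrete scheme from the continuous weak form at $t_{n+1}$, split the errors with the three projection operators $\mathcal{R}_h$, $\mathcal{S}_h$, $\mathcal{Q}_h$ to exploit their orthogonality, test the resulting error equations, and apply $\Delta t\sum_{n=1}^\ell$. The bookkeeping you describe for the $\theta$-dependent terms, the truncation remainder $R_h^{n+1}$, and the use of $\hat{\Theta}_p^{n+1}$ as the test function in the diffusion equation all agree with the paper. Your chosen test function $\bv_h=d_t\Theta_{\bu}^{n+1}$ for the momentum error equation also matches the form of the fifth sum on the right of \reff{e3.15-00}, even though the paper's prose says $\bv_h=\Theta_{\bu}^{n+1}$, so on this point your reading is defensible.

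The step I would flag is the claim that the discrete piece
\[
\bigl(\mathcal{N}(\varepsilon(\mathcal{R}_h\bu(t_{n+1})))-\mathcal{N}(\varepsilon(\bu_h^{n+1})),\varepsilon(d_t\Theta_{\bu}^{n+1})\bigr)
\]
has a ``telescoped sum bounded below by $C_{mn}\|\varepsilon(\Theta_{\bu}^{\ell+1})\|_{L^2}^2$ via \reff{202002034} applied along the discrete trajectory.'' The monotonicity condition \reff{202002034} is a pointwise-in-time coercivity between two fixed arguments; for a genuinely nonlinear $\mathcal{N}$ the pairing against the \emph{time difference} $\varepsilon(d_t\Theta_{\bu}^{n+1})$ does not reduce to a telescope of the form $\frac12 d_t\|\cdot\|^2$ the way it would for a symmetric bilinear form. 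To obtain the $\Delta t\,C_{mn}\|\varepsilon(\Theta_{\bu}^{\ell+1})\|_{L^2}^2$ contribution inside $\mathcal{E}_h^\ell$ (note the $\Delta t$ factor there, which is the footprint of this step), the paper's route is to pair the monotone difference with $\varepsilon(\Theta_{\bu}^{n+1})$ itself so that \reff{202002034} applies directly, giving $\Delta t\sum_{n=1}^\ell C_{mn}\|\varepsilon(\Theta_{\bu}^{n+1})\|_{L^2}^2\ge \Delta t\,C_{mn}\|\varepsilon(\Theta_{\bu}^{\ell+1})\|_{L^2}^2$ after keeping only the last summand. If you insist on the $d_t$-tested form, you need either an explicit structural assumption on $\mathcal{N}$ (e.g.\ that it is the gradient of a convex potential, so that a discrete chain-rule inequality holds) or an additional summation-by-parts argument; ``applied along the discrete trajectory'' as stated is a gap. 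Apart from this, your proposal is the paper's proof.
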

\begin{proof}
Subtracting \reff{e3.1} from \reff{e2.4}, \reff{e3.2} from \reff{e2.5},  \reff{e3.4} from \reff{e2.6},
respectively, we get the following equations:
\begin{alignat}{2}\label{e3.15}
&\bigl(\mathcal{N}(\varepsilon(\bu(t_{n+1})))-\mathcal{N}(\varepsilon(\bu^{n+1}_{h})), \vepsi(\bv_h) \bigr)-\bigl( E_\xi^{n+1}, \div \bv_h \bigr)= 0
&&\quad \forall \bv_h\in V_h, \\
&\kappa_3\bigl(E_{\xi}^{n+1},\varphi_h\bigr)+\bigl(\div E^{n+1}_{\bu}, \varphi_h \bigr) && \label{e3.16} \\
&\hskip 0.9in
= \kappa_1\bigl( E^{n+\theta}_{\eta}, \varphi_h \bigr)+(1-\theta)\Delta t\bigl(  d_t\eta(t_{n+1}), \varphi_h  \bigr)
&&\quad \forall \varphi_h \in M_h, \no \\
&\bigl(d_tE_\eta^{n+1}, \psi_h \bigr)
+\frac{1}{\mu_f} \bigl(K\nab (\kappa_1E_\xi^{n+1} +\kappa_2E_\eta^{n+1})-K\rho_f\bg, \nab \psi_h \bigr)
&&  \label{e3.18}\\
&\hskip 1.9in
=(R^{n+1}_h, \psi_h) &&\quad\forall \psi_h \in W_h,\no\\
&  E_{\bu}^0=0,\quad E_\xi^0=0, \quad E_\eta^{-1}=0. &&\label{e3.19}
\end{alignat}

Using the definition of the projection operators $\mathcal{Q}_h, \mathcal{S}_h, \mathcal{R}_h$, we have
\begin{alignat}{2}\label{e3.20}
&\bigl(\mathcal{N}(\varepsilon(\bu(t_{n+1})))-\mathcal{N}(\varepsilon(\bu^{n+1}_{h})), \vepsi(\bv_h) \bigr)-\bigl( \Phi_\xi^{n+1}, \div \bv_h \bigr)\\
&\quad\quad\quad= (\Pi_{\xi}^{n+1}, \div \bv_h),
&&\quad\forall \bv_h\in V_h, \no\\
&\kappa_3\bigl(\Phi_{\xi}^{n+1}, \varphi_h \bigr) +\bigl(\div\Theta^{n+1}_{\bu}, \varphi_h \bigr)
=\kappa_1 \bigl(\Theta_\eta^{n+\theta}, \varphi_h \bigr) \label{e3.21}\\
&\hskip 0.8in
-\bigl( \div \Lambda_{\bu}^{n+1}, \varphi_h \bigr)
+(1-\theta)\Delta t\bigl(  d_t\eta(t_{n+1}), \varphi_h  \bigr)
&&\quad\forall \varphi_h \in M_h,\no
\end{alignat}
\begin{eqnarray}
&&\bigl(d_t\Phi_\eta^{n+1}, \psi_h \bigr) +\frac{1}{\mu_f} \bigl(K\nab (\kappa_1\Theta_\xi^{n+1}
+\kappa_2\Theta_\eta^{n+1})-K\rho_f\bg, \nab \psi_h \bigr)  \label{e3.23}\\
&&\hskip 1.5in
=\bigl( R^{n+1}_h, \psi_h \bigr) \quad\forall \psi_h \in W_h, \no\\
&&E_{\bu}^0=0,\quad E_\xi^0=0, \quad E_\eta^{-1}=0. \label{e3.24}
\end{eqnarray}

Thus, \eqref{e3.15-00} follows from setting $\bv_h=\Theta^{n+1}_{\bu}$ in \reff{e3.20}, $\varphi_h=\Theta^{n+1}_\xi$
(after applying the difference operator $d_t$ to the equation \reff{e3.21}),
$\psi_h=\hat{\Theta}_p^{n+1}:=\kappa_1\Theta_{\xi}^{n+1}+\kappa_2\Theta_{\eta}^{n+\theta}=\Pi^{n+1}_{p }-\Lambda^{n+1}_{p}+\kappa_1\Phi_\xi^{n+1} +\kappa_2\Phi_\eta^{n+\theta}$ in \reff{e3.23},
adding the resulting equations, and applying the summation operator $\Delta t\sum^{\ell}_{n=1}$ to both sides. The proof is complete.
\end{proof}

\begin{theorem}\label{thm1301}
Let $\{(u_h^{n}, \xi_h^{n}, \eta_h^{n})\}_{n\geq 0}$ be the solution of Algorithm \ref{alg210206-1}, then
there holds the error estimate for $\ell \leq N$
\begin{align}\label{e131130-1}
&\max_{0\leq n\leq \ell}\bigg[ \sqrt{C_{mn}}\|\varepsilon(\Theta_{u}^{n+1})\|_{L^2(\Ome)}
+\sqrt{\kappa_2}\|\Phi_\eta^{n+\theta}\|_{L^2(\Ome)}+\sqrt{\kappa_3}\|\Phi_\xi^{n+1}\|_{L^2(\Ome)}\bigg] \\
&\hskip 1.2in
+\bigg[\Delta t\sum_{n=0}^\ell\frac{K}{\mu_f}
\|\hat{\Theta}_p^{n+1}\|_{L^2}^2\bigg]^\frac{1}{2}
\leq C_1(T)\Delta t+C_2(T)h^2\no
\end{align}
provided that $\Delta t =O(h^2)$ when $\theta=0$ and $\Delta t >0$ when $\theta=1$, where
\begin{align}
C_1(T)&= C\|\eta_t\|_{L^2((0, T); L^2(\Ome))}^2+ C\|(\eta)_{tt}\|_{L^2((0, T); H^{1}(\Ome)')},\\
C_2(T)&=C\|\xi\|_{L^{\infty}((0,T);H^2(\Ome))} +C\|\xi_t\|_{L^2((0,T);H^2(\Ome))} \\
&
+C\|\mathcal{N}(\varepsilon(\mathbf{u}))\|_{L^2((0,T);H^2(\Omega))}+C\|\bu\|_{L^2((0,T);H^3(\Ome))}\no\\
&+C\|\div({\bu})_t\|_{L^2((0,T);H^2(\Ome))}. \no
\end{align}
\end{theorem}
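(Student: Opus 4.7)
The plan is to take the master inequality \eqref{e3.15-00} of Lemma \ref{lma3.3} as the starting point, estimate each of the six sums on its right-hand side using Cauchy--Schwarz, Young's inequality, the projection bounds \eqref{e3.13-02}, \eqref{e3.13-05}, \eqref{e3.13-07}, and the nonlinear Ritz consistency \eqref{202003221}. The goal is to absorb every contribution containing a high-order energy quantity back into $\mathcal{E}_h^{\ell}$ or into the dissipative sum $\Delta t\sum_n \frac{K_1}{\mu_f}\|\nab\hat\Theta_p^{n+1}\|_{L^2}^2$, leaving only data-driven remainders of order $(\Delta t)^2$ or $h^4$, after which a discrete Gronwall inequality closes the estimate.

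For the four sums involving only the interpolation errors $\Pi_\xi$, $\Lambda_{\bu}$ together with $\Theta_{\bu}$ or $\Phi_\xi$, I would split each inner product by Young's inequality with a small parameter $\delta$ and use the projection bounds to turn the $\Pi_\xi$- or $\Lambda_{\bu}$-side into $C h^4 \|\xi\|_{L^\infty(H^2)}^2$, $C h^4\|\bu_t\|_{L^2(H^3)}^2$, and so on. The coupled pair $\sum_n(\Phi_\xi^{n+1},\div\Theta_{\bu}^{n+1}) - \sum_n(\div d_t\Theta_{\bu}^{n+1},\Phi_\xi^{n+1})$ deserves special care: I would perform discrete summation by parts in $n$ to shift $d_t$ off $\Theta_{\bu}$ onto $\Phi_\xi$, producing $\sum_n(\div\Theta_{\bu}^n, d_t\Phi_\xi^{n+1})$, whose $\|d_t\Phi_\xi^{n+1}\|_{L^2}^2$ factor can be absorbed into the $\kappa_3\|d_t\Phi_\xi^{n+1}\|^2$ dissipation already present on the left-hand side of \eqref{e3.15-00}.

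Next I would handle the consistency residuals. The time-discretization term $(R_h^{n+1},\hat\Theta_p^{n+1})$ is bounded via the $H^{-1}$ pairing $\le \|R_h^{n+1}\|_{H^{-1}}\|\nab\hat\Theta_p^{n+1}\|_{L^2}$; after Young's inequality the gradient piece is absorbed into the dissipation, while \eqref{e3.15-02} and Cauchy--Schwarz give $\Delta t\sum_n\|R_h^{n+1}\|_{H^{-1}}^2 \le (\Delta t)^2 \|\eta_{tt}\|_{L^2(0,T;H^{-1})}^2$, bounded by a constant multiple of $C_3^2$ through \eqref{eq3.23}. For the nonlinear consistency term I would again perform summation by parts in $n$ to transfer $d_t$ onto the difference $\mathcal{N}(\varepsilon(\bu(t_{n+1})))-\mathcal{N}(\varepsilon(\mathcal{R}_h\bu(t_{n+1})))$ and then estimate this discrete time derivative by $C h^2\|\bu_t\|_{L^2(H^3)}$ using the nonlinear Lipschitz property \eqref{202002033} together with \eqref{e3.13-07}; a boundary contribution at $n=\ell$ pairs $h^2$ with $\|\varepsilon(\Theta_{\bu}^{\ell+1})\|$ and is absorbed into $\mathcal{E}_h^\ell$.

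The main obstacle will be the two $(1-\theta)$ splitting terms, which vanish when $\theta=1$ but are nontrivial when $\theta=0$. The first, $(\Delta t)^2\sum(d_t^2\eta_h(t_{n+1}),\Phi_\xi^{n+1})$, is manageable by Cauchy--Schwarz thanks to the extra $(\Delta t)^2$ prefactor and the $H^{-1}$ bound on $\eta_{tt}$. The genuinely hard term is
\[
(1-\theta)(\Delta t)^2 \sum_n \tfrac{\kappa_1}{\mu_f}\bigl(K d_t\nab\Theta_\xi^{n+1},\nab\hat\Theta_p^{n+1}\bigr),
\]
for which there is no matching dissipation controlling $d_t\nab\Theta_\xi$. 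My plan is to apply the inverse inequality \eqref{e3.13-00} to replace $\nab d_t\Theta_\xi^{n+1}$ by $h^{-1}\|d_t\Theta_\xi^{n+1}\|_{L^2}$, decompose $\Theta_\xi = \Phi_\xi + (\mathcal{S}_h\xi-\xi)$, and apply Young's inequality; after absorbing $\|\nab\hat\Theta_p^{n+1}\|^2$ into the dissipation one is left with a factor $(\Delta t/h)^2$ multiplying $\|d_t\Phi_\xi^{n+1}\|^2$, which is absorbable into the $\kappa_3\|d_t\Phi_\xi^{n+1}\|^2$ term only under the parabolic CFL condition $\Delta t = O(h^2)$. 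This is precisely what forces the hypothesis on $\Delta t$ when $\theta=0$. After all absorptions, the remaining inequality takes the form $\mathcal{E}_h^\ell + \tfrac12 \Delta t\sum \tfrac{K_1}{\mu_f}\|\nab\hat\Theta_p^{n+1}\|^2 \le C[(\Delta t)^2 C_1(T)^2 + h^4 C_2(T)^2] + C\Delta t\sum_n\mathcal{E}_h^n$, and the discrete Gronwall inequality delivers the desired bound; a final triangle inequality that adds the $h^2$-size projection errors $\Lambda_{\bu}$, $\Pi_\xi$, $\Pi_\eta$ converts the estimates on $\Theta_{\bu},\Phi_\xi,\Phi_\eta$ into \eqref{e131130-1}.
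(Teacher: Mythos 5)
Your overall blueprint (start from Lemma \ref{lma3.3}, bound each term, absorb dissipation, invoke the inverse inequality for the $(1-\theta)$ term to produce the CFL constraint $\Delta t = O(h^2)$, then triangle inequality) matches the paper, and your treatment of the $R_h$ term, the nonlinear-consistency term, and the last $(1-\theta)$ term is sound. But there is a genuine gap in your handling of the \emph{first} $(1-\theta)$ term $(1-\theta)(\Delta t)^2\sum_{n}\bigl(d_t^2\eta(t_{n+1}),\Phi_\xi^{n+1}\bigr)$: you write it off as ``manageable by Cauchy--Schwarz thanks to the extra $(\Delta t)^2$ prefactor and the $H^{-1}$ bound on $\eta_{tt}$.'' That does not close. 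The $H^{-1}$ regularity from \eqref{eq3.23} only lets you pair $d_t^2\eta$ against something in $H^1$, but the left side of \eqref{e3.15-00} gives you no control of $\|\nabla\Phi_\xi^{n+1}\|_{L^2}$ (the dissipation involves $\nabla\hat{\Theta}_p^{n+1}$, which is built from $\Theta_\xi,\Theta_\eta$, not from $\Phi_\xi$); and using an inverse inequality to trade $\|\nabla\Phi_\xi\|\le Ch^{-1}\|\Phi_\xi\|$ destroys the rate. An $L^2$--$L^2$ pairing would instead require $\eta_{tt}\in L^2(0,T;L^2)$, which Lemma \ref{smooth} does not provide. Even if you grant it, Cauchy--Schwarz alone yields a remainder of size $\Delta t\|\eta_{tt}\|\,\max_n\|\Phi_\xi^n\|$, i.e.\ $O(\Delta t)$ in the \emph{squared} energy, hence only $O(\sqrt{\Delta t})$ convergence.

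What the paper actually does is essential here and is the idea you are missing: summation by parts in $n$ (formula \eqref{e131209-00}) lowers the order of the difference from $d_t^2\eta$ to $d_t\eta$, which is controlled in $L^2(0,T;L^2)$ via $\eta_t$ by Lemma \ref{smooth}, and then the resulting $\|\Phi_\xi^{\ell+1}\|_{L^2}$ and $\|d_t\Phi_\xi^{n+1}\|_{L^2}$ factors are \emph{not} estimated by brute force against the $\kappa_3$-dissipation (which again loses a power of $\Delta t$); instead they are re-expressed through the discrete inf-sup condition \eqref{e3.51} and the error equation \eqref{e3.20} as $\frac{1}{\beta_1}\sup_{\bv_h}[\ldots]$, so that $\|\Phi_\xi\|$ is bounded by $C_{lp}\|\varepsilon(\Theta_u)\|+C_{lp}\|\varepsilon(\Lambda_u)\|+\|\Pi_\xi\|$; the factor $\frac{1}{(\Delta t)^2}$ that appears then cancels exactly against the $(\Delta t)^2$ prefactor and the $\|\varepsilon(\Theta_u^{\ell+1})\|^2$ piece is absorbed into the left side (see \eqref{e131209-01} and \eqref{202005301}). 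Without this inf-sup representation of $\Phi_\xi$ you cannot recover the optimal $O(\Delta t)$ rate in the $\theta=0$ case. (Your plan to integrate by parts in $n$ for the nonlinear consistency term and the $\Phi_\xi$--$\Theta_u$ pair, and to close with a discrete Gronwall step, is a legitimate stylistic variation; the paper achieves the same by direct Young inequalities without Gronwall.)
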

\begin{proof}
Using the fact of $\Theta_{\bu}^0=\mathbf{0}$, $\Theta_\xi^0=0$, $\Theta_\eta^{-1}=0$ and \reff{e3.15-00}, we get
\begin{align}\label{e131204-00}
&\frac{1}{2} \Bigl[2\Delta tC_{mn}\|\vepsi(\Theta^{\ell+1}_\bu)\|_{L^2(\Ome)}^2+\kappa_2\|\Phi_{\eta}^{\ell+\theta} \|_{L^2(\Ome)}^2+\kappa_3\|\Phi_{\xi}^{\ell+1}\|_{L^2(\Ome)}^2 \Bigr]\\
&+\Delta t\sum_{n=1}^\ell\bigl[\frac{1}{\mu_f} \bigl(K\nabla\hat{\Theta}_p^{n+1}-K\rho_f\bg,
\nabla\hat{\Theta}_p^{n+1} \bigr)\no\\
&
+\frac{\Delta t}{2}\Bigl(
\kappa_2\| d_t\Phi_{\eta}^{n+\theta} \|_{L^2(\Ome)}^2+\kappa_3\| d_t\Phi_{\xi}^{n+1} \|_{L^2(\Ome)}^2 \Bigr) \bigr]\no\\
&
\leq (1-\theta)(\Delta t)^2\sum_{n=1}^\ell\bigl( d_t^2 \eta(t_{n+1}), \Phi_{\xi}^{n+1} \bigr)\no\\
&
+\Delta t\sum_{n=1}^\ell \bigl( R_h^{n+1}, \hat{\Theta}_p^{n+1} \bigr)+C_{mn}\|\vepsi(\Theta^{1}_\bu)\|_{L^2(\Ome)}^2\no\\
&
+\Delta t\sum_{n=1}^\ell\bigl(-\mathcal{N}(\varepsilon(\bu(t_{n+1})))+\mathcal{N}(\varepsilon(\mathcal{R}_h(\bu(t_{n+1})))),\vepsi(d_{t}\Theta^{n+1}_\bu)\bigr)\no\\
&
+\Delta t\sum_{n=1}^\ell \Bigl[\bigl( \Pi_\xi^{n+1}, \div \Theta_{\bu}^{n+1}\bigr)
-\bigl( \div d_t \Lambda_{\bu}^{n+1}, \Phi_\xi^{n+1} \bigr)\Bigr] \no\\
&
+\Delta t\sum_{n=1}^\ell
\Bigl[\bigl(\Phi_\xi^{n+1}, \div  \Theta_{\bu}^{n+1}\bigr)
-\bigl( \div d_t \Theta_u^{n+1}, \Phi_\xi^{n+1} \bigr) \Bigr]\no\\
&
+(1-\theta)(\Delta t)^2\sum_{n=1}^\ell\frac{\kappa_1}{\mu_f} \bigl(Kd_t\nab\Theta_{\xi}^{n+1}, \nab\hat{\Theta}_p^{n+1}\bigr).\no
\end{align}
We now estimate each term on the right-hand side of \reff{e131204-00}. To bound the first term ($\theta=0$) on the right-hand side of \reff{e131204-00}, we use the summation by parts formula and $d_t\eta_h(t_0)=0$ and get
\begin{align}\label{e131209-00}
\sum_{n=0}^\ell \bigl( d_t^2 \eta(t_{n+1}), \Phi_{\xi}^{n+1} \bigr)
=\frac{1}{\Delta t}\bigl( d_t \eta(t_{l+1}),  \Phi_{\xi}^{l+1}\bigr)
-\sum_{n=1}^\ell \bigl( d_t \eta(t_{n}), d_t\Phi_{\xi}^{n+1} \bigr).
\end{align}
Now, we can get the estimation of the first term on the right-hand side of \reff{e131209-00} as follows:
\begin{align}\label{e131209-01}
&\frac{1}{\Delta t}\bigl( d_t \eta(t_{\ell+1}), \Phi_{\xi}^{\ell+1}\bigr)
\leq\frac{1}{\Delta t}\|d_t \eta(t_{\ell+1})\|_{L^2(\Ome)}\| \Phi_{\xi}^{\ell+1}\|_{L^2(\Ome)}\\
&\leq \frac{1}{\Delta t}\|\eta_t\|_{L^2((t_\ell, t_{\ell+1}); L^2(\Ome))}\cdot \frac{1}{\beta_1}
\sup_{\bv_h\in\bV_h}[\frac{\bigl(\mathcal{N}(\varepsilon(\mathcal{R}_h(\bu(t_{l+1})))), \varepsilon(\bv_h)  \bigr)}{\|\nabla \bv_h\|_{L^2(\Ome)}}\no\\
&
-\frac{(\mathcal{N}(\varepsilon(\bu^{l+1}_{h})), \varepsilon(\bv_h)  \bigr)}{\|\nabla \bv_h\|_{L^2(\Ome)}}-\frac{(\Pi_{\xi}^{\ell+1},\div \bv_h)}{\|\nabla \bv_h\|_{L^2(\Ome)}}]\no\\
&
\leq\frac{C_{mn}}{4(\Delta t)^2} \|\varepsilon(\Theta_{\bu}^{\ell+1})\|_{L^2(\Ome)}^{2}
+\frac{C_{mn}}{4(\Delta t)^2} \|\varepsilon(\Lambda_{\bu}^{\ell+1})\|_{L^2(\Ome)}^{2}
\no\\
&+\frac{C}{\beta_1^{2}} \|\eta_t\|_{L^2((t_\ell, t_{\ell+1}); L^2(\Ome))}^2+\frac{C}{C_{lp}\beta_1^{2}}\|\Pi_{\xi}^{\ell+1}\|_{L^2(\Ome)}^2.\no
\end{align}
As for the second term on the right-hand side of \reff{e131209-00}, we have
\begin{align}\label{202005301}
&\sum_{n=1}^\ell\bigl( d_t \eta(t_{n}), d_t\Phi_{\xi}^{n+1} \bigl)
\leq \sum_{n=1}^\ell\| d_t \eta(t_{n})\|_{L^2(\Ome)}\| d_t\Phi_{\xi}^{n+1}\|_{L^2(\Ome)}\\
&
\leq \sum_{n=1}^\ell|| d_t \eta(t_{n})\|_{L^2(\Ome)}\cdot\frac{1}{\beta_1}
\sup_{\bv_h\in\bV_h}[\frac{\bigl( d_{t}\mathcal{N}(\varepsilon(\mathcal{R}_h(\bu(t_{l+1})))), \varepsilon(\bv_h)  \bigr)}{\|\nabla \bv_h\|_{L^2(\Ome)}}\no\\
&
-\frac{\bigl( d_{t}\mathcal{N}(\varepsilon(\bu^{l+1}_{h})), \varepsilon(\bv_h)  \bigr)}{\|\nabla \bv_h\|_{L^2(\Ome)}}
-\frac{(d_t\Pi_{\xi}^{n+1},\div \bv_h)}{\|\nabla \bv_h\|_{L^2(\Ome)}}]\no\\
&
\leq \frac{C}{\beta_1} \sum_{n=1}^\ell \| d_t \eta(t_{n})\|_{L^2(\Ome)}
\Bigl[ C_{lp}\| d_t\varepsilon(\Theta_{\bu}^{n+1})\|_{L^2(\Ome)}\no\\
&
+ C_{lp}\| d_t\varepsilon(\Lambda_{\bu}^{n+1})\|_{L^2(\Ome)}
+\|d_t\Pi_{\xi}^{n+1}\|_{L^2(\Ome)}  \Bigr]\no\\
&+\frac{C}{C_{lp}\beta_1^{2}} \|d_{t}\Pi_{\xi}^{n+1}\|_{L^2(\Ome)}^2\Bigr]
+\frac{C}{\beta_1^{2}}\|\eta_t\|_{L^2((0, T); L^2(\Ome))}^2.\no
\end{align}

The second term on the right-hand side of \reff{e131204-00} can be bounded as
\begin{align}
\bigl|\bigl( R_h^{n+1}, \hat{\Theta}_p^{n+1} \bigr) \bigr|
&\leq \|R_h^{n+1}\|_{H^{1}(\Ome)'}\|\nabla\hat{\Theta}_p^{n+1}\|_{L^2(\Ome)} \label{e131209-02}\\
&\leq \frac{K}{4\mu_f}\|\nabla\hat{\Theta}_p^{n+1}\|_{L^2(\Ome)}^2
        +\frac{\mu_f}{K}\|R_h^{n+1}\|_{H^{1}(\Ome)'}^2\no\\
&\leq \frac{K}{4\mu_f}\|\nabla\hat{\Theta}_p^{n+1}\|_{L^2(\Ome)}^2
        +\frac{\mu_f\Delta t}{3K}\|\eta_{tt}\|_{L^2((t_{n}, t_{n+1}); H^{1}(\Ome)')}^2,\no
\end{align}
where we have used the fact that
\begin{align*} 
\|R_h^{n+1}\|_{H^{1}(\Ome)'}^2 \leq\frac{\Delta t}{3}\int_{t_{n}}^{t_{n+1}} \|\eta_{tt}\|_{H^{1}(\Ome)'}^2\,dt.
\end{align*}

The third term on the right-hand side  of \reff{e131204-00} can be bounded by
\begin{eqnarray}\label{2020020310}
&& \Delta t\sum_{n=1}^\ell\bigl(-\mathcal{N}(\varepsilon(\mathbf{u}(t_{n+1})))
+\mathcal{N}(\varepsilon(\mathcal{R}_h(\mathbf{u}(t_{n+1})))),\varepsilon(d_{t}\Theta^{n+1}_\mathbf{u})\bigr)\\
  && \leq \Delta t\sum_{n=1}^\ell \bigl[      \frac{2C_{lp}}{C_{mn}}\|\mathcal{N}(\varepsilon(\mathbf{u}(t_{n+1})))
-\mathcal{N}(\varepsilon(\mathcal{R}_h(\mathbf{u}(t_{n+1}))))\|_{L^2(\Omega)}^{2}\nonumber\\
   &&+\frac{C_{mn}C_{lp}}{8}\|d_{t}\varepsilon(\Theta_{\mathbf{u}}^{n+1})\|_{L^2(\Omega)}^{2}\bigr],\nonumber
\end{eqnarray}

The fourth term on the right-hand side  of \reff{e131204-00} can be bounded by
\begin{align}\label{e141104-01}
&\Delta t\sum_{n=1}^\ell \Bigl[ \bigl( \Pi_{\xi}^{n+1}, \div \Theta_{\bu}^{n+1}\bigr)
-\bigl( \div d_t \Lambda_{\bu}^{n+1}, \Phi_{\xi}^{n+1} \bigr)\Bigl]\\
&\quad
\leq \frac12\Delta t\sum_{n=1}^\ell\bigr(\|\Pi_{\xi}^{n+1}\|_{L^2(\Ome)}^2
+ C\|\varepsilon(\Theta_{\bu}^{n+1})\|_{L^2(\Ome)}^2\bigr)\no\\
&\quad
+\frac12\Delta t\sum_{n=1}^\ell\bigr(\|\div d_t \Lambda_{\bu}^{n+1}\|_{L^2(\Ome)}^2+ \|\Phi_{\xi}^{n+1}\|_{L^2(\Ome)}^2 \bigr).\notag
\end{align}
The fifth term on the right-hand side of \reff{e131204-00} can be bounded by
\begin{align}
&\Delta t\sum_{n=1}^\ell \Bigl[ \bigl( \Phi_{\xi}^{n+1}, \div \Theta_{\bu}^{n+1}\bigr)
-\bigl( \div d_t \Lambda_{\bu}^{n+1}, \Phi_{\xi}^{n+1} \bigr)\Bigl]\\
&\quad
\leq \frac12\Delta t\sum_{n=1}^\ell\bigr(\|\Phi_{\xi}^{n+1}\|_{L^2(\Ome)}^2
+ C\|\varepsilon(\Theta_{\bu}^{n+1})\|_{L^2(\Ome)}^2\bigr)\no\\
&\quad
+\frac12\Delta t\sum_{n=1}^\ell\bigr(\|\div d_t \Theta_{\bu}^{n+1}\|_{L^2(\Ome)}^2+ \|\Phi_{\xi}^{n+1}\|_{L^2(\Ome)}^2 \bigr)\no
\end{align}
by using the fact of $
\|\div \Theta_{\bu}^{n}\|_{L^2(\Ome)} \leq C\|\varepsilon(\Theta_{\bu}^{n})\|_{L^2(\Ome)}$.

If $\theta=0$, we also need to bound the last term on the right-hand side of \reff{e131204-00},
which is carried out below:
\begin{align}\label{e141104-02}
&\sum_{n=1}^\ell \bigl(d_t\nabla\Theta_{\xi}^{n+1}, \nabla\hat{\Theta}_p^{n+1} \bigr)
\leq\sum_{n=1}^\ell\|d_t\Theta_{\xi}^{n+1}\|_{L^2(\Ome)}\|\nabla\hat{\Theta}_p^{n+1}\|_{L^2(\Ome)}\\
&\,
\leq\sum_{n=1}^\ell \sup_{\bv_h\in \bV_h}
\frac{\bigl(d_{t}\mathcal{N}(\varepsilon(\mathcal{R}_h(\bu(t_{l+1}))))-d_{t}\mathcal{N}(\varepsilon(\bu^{l+1}_{h})), \varepsilon(\bv_h)  \bigr)}{\|\nabla \bv_h\|_{L^2(\Ome)}}\times \no \\
&\|\nabla\hat{\Theta}_p^{n+1}\|_{L^2(\Ome)}-\frac{\bigl(d_t\Lambda_{\xi}^{n+1},\div \bv_h\bigl)}{\|\nabla \bv_h\|_{L^2(\Ome)}}
\,\|\nabla\hat{\Theta}_p^{n+1}\|_{L^2(\Ome)}\no\\
&\,
\leq\sum_{n=1}^\ell \Bigl[\frac{C_{lp}\kappa_1\Delta t}{h^2\beta_1^2}m\mu^{2}\|d_{t}\varepsilon(\Theta_{\bu}^{\ell+1})\|_{L^2(\Ome)}^{2}
+\frac{C_{lp}\kappa_1\Delta t}{h^2\beta_1^2}m\mu^{2}\|d_{t}\varepsilon(\Lambda_{\bu}^{\ell+1})\|_{L^2(\Ome)}^{2}\no\\
&+\frac{\kappa_1\Delta t}{C_{lp}h^2\beta_1^2} \|d_t\Lambda_{\xi}^{n+1}\|_{L^2}^{2}
+\frac{\kappa_1^{-1}}{4\Delta t}\|\nabla\hat{\Theta}_p^{n+1}\|_{L^2(\Ome)}^2 \Bigr].\no
\end{align}
Substituting \reff{e131209-00}-\reff{e141104-02} into \reff{e131204-00} and rearranging all of terms, we get
\begin{eqnarray}\label{e131209-04}
&&\mu \|\varepsilon(\Theta_{\bu}^{\ell+1})\|_{L^2(\Ome)}^2
+\kappa_2\|\Phi_{\eta}^{\ell+\theta}\|_{L^2(\Ome)}^2+\kappa_3\|\Phi_{\xi}^{\ell+1}\|_{L^2(\Ome)}^2 
+\Delta t \sum_{n=1}^\ell\frac{K}{\mu_f} \|\nabla\hat{\Theta}_p^{n+1} \|_{L^2(\Ome)}^2 \no\\
&&
\leq \frac{4\mu_f(\Delta t)^2}{3K} \|\eta_{tt}\|_{L^2((0, T); H^{1}(\Omega)')}^2
+\frac{4m\mu(\Delta t)^2}{\beta_1^2} \|\eta_t\|_{L^2((0,T);L^2(\Ome))}^2, \no\\
&&
+C_1\|\Pi_{\xi}^{\ell+1}\|_{L^2(\Ome)}^2
+\Delta t\sum_{n=1}^\ell \|d_t\Pi_{\xi}^{n+1}\|_{L^2(\Ome)}^2+\Delta t\sum_{n=1}^\ell \|\div d_t \Lambda_{\bu}^{n+1}\|_{L^2(\Ome)}^2, \no\\
&&
+\Delta t\sum_{n=1}^l \frac{2C_{lp}}{C_{mn}}\|\mathcal{N}(\varepsilon(\mathbf{u}(t_{n+1})))
-\mathcal{N}(\varepsilon(\mathcal{R}_h(\mathbf{u}(t_{n+1}))))\|_{L^2(\Omega)}^{2}\no
\end{eqnarray}
provide that
$\Delta t\leq C_{mn}\mu_f\beta_1^2(8C_{lp}\kappa_1^2 K)^{-1} h^2$ in the case of $\theta=0$, but
it holds for all $\Delta t>0$ in the case of $\theta=1$. Hence, \reff{e131130-1} follows by using the
approximation properties of the projection operators $\mathcal{Q}_h, \mathcal{R}_h$ and
$\mathcal{S}_h$. The proof is complete.
\end{proof}

Using Theorem \ref{thm1301}, we have the following main result.
\begin{theorem}\label{thm3.5}
The solution of Algorithm \ref{alg210206-1} satisfies the following error estimates:
\begin{align}\label{e131209-05}
&\max_{0\leq n\leq N} \Bigl[ \sqrt{C_{mn}}\|\nabla(u(t_{n}))-\nabla(u_{h}^{n})\|_{L^2(\Ome)}
+\sqrt{\kappa_2} \|\eta(t_n)-\eta_h^n\|_{L^2(\Ome)}\\
&\hskip .5in
+\sqrt{\kappa_3} \|\xi(t_n)-\xi_h^n\|_{L^2(\Ome)} \Bigr]
\leq \widehat{C}_1(T) \Delta t +\widehat{C}_2(T)h^2,\no \\
&\bigg[ \Delta t \sum_{n=0}^N \frac{K}{\mu_f} \|\nabla(p(t_n)-p_h^n) \|_{L^2(\Ome)}^2 \bigg]^{\frac12}
\leq \widehat{C}_1(T) \Delta t +\widehat{C}_2(T)h \label{E_p}
\end{align}
provided that $\Delta t=O(h^2)$ if $\theta=0$ and $\Delta t>0$ if $\theta=1$. Here
\begin{align*}
\widehat{C}_1(T)&:=C_1(T),\\
\widehat{C}_2(T)&:=C_2(T)+\|\xi\|_{L^{\infty}((0,T);H^2(\Ome))}+\|\eta\|_{L^{\infty}((0,T);H^2(\Ome))} \\
&
+\|\nabla\bu\|_{L^{\infty}((0,T);H^2(\Ome))}.
\end{align*}
\end{theorem}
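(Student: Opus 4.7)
The plan is to reduce Theorem \ref{thm3.5} to Theorem \ref{thm1301} by splitting each error into a projection error (which has known bounds from the approximation theory of $\mathcal{Q}_h$, $\mathcal{S}_h$, $\mathcal{R}_h$) and a finite element discretization error (which Theorem \ref{thm1301} has just estimated). Concretely, I would write
\begin{align*}
\bu(t_n)-\bu_h^n &= \Lambda_{\bu}^n + \Theta_{\bu}^n,\\
\xi(t_n)-\xi_h^n &= \Pi_{\xi}^n + \Phi_{\xi}^n,\\
\eta(t_n)-\eta_h^{n+\theta-1\,\text{or}\,n} &= \Pi_{\eta}^{\cdot} + \Phi_{\eta}^{\cdot},
\end{align*}
and apply the triangle inequality to each of the three terms appearing on the left-hand side of \eqref{e131209-05}.

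The $\Theta$ and $\Phi$ contributions are handled directly by Theorem \ref{thm1301}, which yields the desired $\Delta t + h^2$ bound for $\sqrt{C_{mn}}\|\varepsilon(\Theta_{\bu}^{n})\|_{L^2}$, $\sqrt{\kappa_2}\|\Phi_\eta^{n+\theta}\|_{L^2}$, and $\sqrt{\kappa_3}\|\Phi_\xi^{n+1}\|_{L^2}$ under the stated time-step restriction (with $\Delta t = O(h^2)$ when $\theta=0$ and any $\Delta t>0$ when $\theta=1$). For the $\Lambda$ and $\Pi$ contributions, I would invoke the approximation estimates \eqref{e3.13-02}, \eqref{e3.13-05} and \eqref{e3.13-07}: these give $\|\nabla \Lambda_{\bu}^n\|_{L^2}\le C h^2 \|\bu(t_n)\|_{H^3}$, $\|\Pi_\xi^n\|_{L^2}\le C h^2 \|\xi(t_n)\|_{H^2}$, and $\|\Pi_\eta^{\cdot}\|_{L^2}\le C h^2 \|\eta(\cdot)\|_{H^2}$, all of which are absorbed in $\widehat{C}_2(T)h^2$. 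Taking the maximum over $0\le n\le N$ then yields \eqref{e131209-05}.

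For the pressure estimate \eqref{E_p}, I would use the identity $p = \kappa_1\xi + \kappa_2 \eta$ and the update \eqref{e3.5} to write
\begin{equation*}
\nabla(p(t_{n+1})-p_h^{n+1}) = \nabla\hat{\Theta}_p^{n+1} + \nabla\bigl[\kappa_1(\xi(t_{n+1})-\mathcal{S}_h\xi(t_{n+1})) + \kappa_2(\eta(t_{n+\theta})-\mathcal{S}_h\eta(t_{n+\theta}))\bigr] + (\text{time-jump term if } \theta=0).
\end{equation*}
Squaring, summing with weights $\Delta t\,K/\mu_f$, and applying Theorem \ref{thm1301} to control the $\hat{\Theta}_p$-part, together with \eqref{e3.13-05} (which gives only $O(h)$ in $\|\nabla(\cdot - \mathcal{S}_h \cdot)\|_{L^2}$) for the projection part, accounts for the weaker rate $O(h)$ versus $O(h^2)$ appearing in the right-hand side of \eqref{E_p}. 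Any residual time-jump term when $\theta=0$ is of order $\Delta t$ times $\|\eta_t\|_{L^2}$ and is absorbed into $\widehat{C}_1(T)\Delta t$.

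The step I expect to require the most care is not a deep obstacle but rather a bookkeeping one: ensuring the weighted norms on the left sides of \eqref{e131209-05} and \eqref{E_p} precisely track the constants $C_{mn},\kappa_2,\kappa_3,K/\mu_f$ from Theorem \ref{thm1301}, and confirming that the additional regularity norms $\|\xi\|_{L^\infty(H^2)}$, $\|\eta\|_{L^\infty(H^2)}$, and $\|\nabla\bu\|_{L^\infty(H^2)}$ that enter $\widehat{C}_2(T)$ come entirely from the static projection-error estimates (not from the dynamic part already absorbed into $C_2(T)$). Once that accounting is done, the theorem follows by assembling the two groups of estimates via the triangle inequality and no further analysis is required.
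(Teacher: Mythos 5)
Your proposal is correct and follows essentially the same route as the paper: split each error $\bu(t_n)-\bu_h^n$, $\xi(t_n)-\xi_h^n$, $\eta(t_n)-\eta_h^n$, and $p(t_n)-p_h^n$ into a projection part and a discretization part, apply the triangle inequality, control the discretization parts by Theorem~\ref{thm1301}, and control the projection parts by \eqref{e3.13-02}, \eqref{e3.13-05}, \eqref{e3.13-07} (noting, as you correctly observe, that $\|\nabla(\varphi-\mathcal{S}_h\varphi)\|_{L^2}$ is only $O(h)$, which is why \eqref{E_p} has rate $h$ rather than $h^2$). One small point in your favor: you use the $\mathcal{Q}_h$-based splits $\Pi_\xi^n+\Phi_\xi^n$ and $\Pi_\eta^n+\Phi_\eta^n$ for $\xi$ and $\eta$, which is what Theorem~\ref{thm1301} actually bounds, whereas the paper's own one-line proof lists the $\mathcal{S}_h$-based split $\Lambda_\xi^n+\Theta_\xi^n$ — a harmless slip since both projections have the same $L^2$ approximation order, but your version matches the lemma more precisely.
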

\begin{proof}
The above estimations follow immediately from an application of the triangle inequality on
\begin{alignat*}{2}
\bu(t_n)-\bu_h^n &=\Lambda_{\bu}^n+\Theta_{\bu}^n,\\
\xi(t_n)-\xi_h^n &=\Lambda_{\xi }^n+\Theta_{\xi }^n, \\
\eta(t_n)-\eta_h^n &=\Lambda_{\eta }^n+\Theta_{\eta }^n,\\
p(t_n)-p_h^n &=\hat{\Lambda}_p^n + \hat{\Theta}_p^n.
\end{alignat*}
and appealing to \reff{e3.13-02}, \reff{e3.13-07} and Theorem \ref{thm1301}.
\end{proof}
\section{Numerical tests}\label{sec-4}
~

\medskip
{\bf Test 1.} Let $\Omega=[0,1]\times [0,1]$, $\Gamma_1=\{(1,x_2); 0\leq x_2\leq 1\}$,
$\Gamma_2=\{(x_1,0); 0\leq x_1\leq 1\}$,  $\Gamma_3=\{(0,x_2); 0\leq x_2\leq 1\}$,
$\Gamma_4=\{(x_1,1); 0\leq x_1\leq 1\}$, and $T=1$. The problem is \eqref{e2.6}-\eqref{e2.9}
with following source functions:

\begin{align*}
\sigma( \bu)&=\mu\varepsilon(\mathbf{u})+\mu \nabla^{T}\mathbf{u}\nabla\mathbf{u}+\lambda\|\nabla\mathbf{u}\|_{F}^{2}I+\lambda \div\mathbf{u}I,\\
\mathcal{N}(\varepsilon( \bu))&=\mu\varepsilon(\mathbf{u})+\mu \nabla^{T}\mathbf{u}\nabla\mathbf{u}+\lambda\|\nabla\mathbf{u}\|_{F}^{2}I,\\
\mathbf{f} &=-(\lambda+\mu) t(1,1)^T-2(\mu+\lambda) t^{2}(x_{1},x_{2})^{T}+\alpha \cos(x_1+x_2)e^t(1,1)^T,\\
\phi &=\Bigl(c_0+\frac{2\kappa}{\mu_f} \Bigr)\sin(x_1+x_2)e^t+\alpha(x_1+x_2),
\end{align*}
and the following boundary and initial conditions:
\begin{alignat*}{2}
p &= \sin(x_1+x_2)e^t  &&\qquad\mbox{on }\partial\Omega_T,\\
u_1 &= \frac12 x_1^2t &&\qquad\mbox{on }\Gamma_j\times (0,T),\, j=1,3,\\
u_2 &= \frac12 x_2^2t &&\qquad\mbox{on }\Gamma_j\times (0,T),\, j=,2,4,\\
\sigma\bf{n}-\alpha \emph{p}\bf{n} &= \mathbf{f}_1, &&\qquad \mbox{on } \p\Ome_T,\\
\mathbf{u}(x,0) = \mathbf{0},  \quad p(x,0) &=\sin(x_1+x_2) &&\qquad\mbox{in } \Ome,
\end{alignat*}
where
\begin{align*}
    \mathbf{f}_1(x,t)= \lambda(x_1+x_2) (n_1,n_2)^T t +\mu t(x_{1}n_{1},x_{2}n_{2})^{T}+\mu t^{2}\bigl(x_{1}^{2}n_{1},x_{2}^{2}n_{2}\bigr)^{T},\\
+\lambda t^{2}(x_{1}^{2}+x_{2}^{2})(n_1,n_2)^T-\alpha\sin(x_1+x_2)(n_1,n_2)^T e^t,
\end{align*}
the exact solution of this problem is
\[
\mathbf{u}(x,t)=\frac{t}2 \bigl( x_1^2, x_2^2 \bigr)^T,\qquad p(x,t)=\sin(x_1+x_2)e^t.
\]

The parameters are chosen as follows: $\lambda=0.00042$, $ \mu=0.0048$, $c_{0}= 0.00001$, $\alpha=0.83$, $\nu=0.04$, $ K =0.00001$ and $E=0.01$. Note that the boundary conditions used above are not pure Neumann conditions, instead, they are mixed Dirichlet-Neumann conditions, so the approach and methods of this paper also work in this case.

\begin{table}[!htbp]
\begin{center}
\caption{Spatial errors and convergence rates of $\mathbf{u} $ and $p$} \label{tab1}
\begin{tabular}{l c c c c}
\hline
$h$ & $\|\mathbf{u}-\mathbf{u}_h\|_{L^2(H^1)}$ & CR& $\|p-p_h\|_{L^2(H^1)}$ & CR\\ \hline
$h=0.176$ & 2.24e-4 &       & 1.41e-5  &      \\ 
$h=0.088$ & 3.87e-5 &2.5331 &  3.49e-6&2.0144     \\ 
$h=0.044$ & 6.79e-6 &2.5109 &  8.74e-7&1.9975  \\ 
$h=0.022$ & 1.19e-6 &2.5125       &2.19e-7& 1.9967\\ \hline
\end{tabular}
\end{center}
\end{table}


Table \ref{tab1} displays the error of displacement and pressure in\ $L^2(0,T;H^1(\Omega))$-norm, which are consistent with the theoretical result.

\begin{figure}[!htbp]
	\centering
	\includegraphics[height=2.5in,width=3.5in]{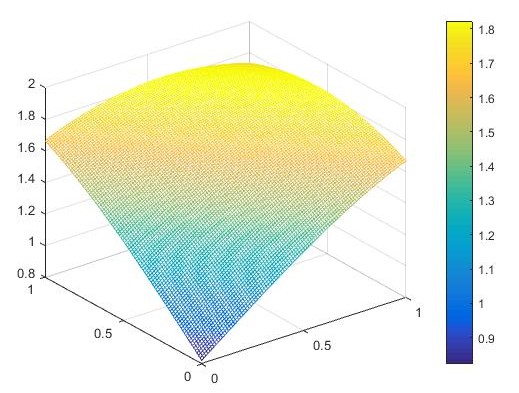}
	\caption{The numerical solution of the pressure $p_{h}^{n+1}$.}\label{figure_p4}
\end{figure}

\begin{figure}[!htbp]
	\centering
	\includegraphics[height=2.5in,width=3.5in]{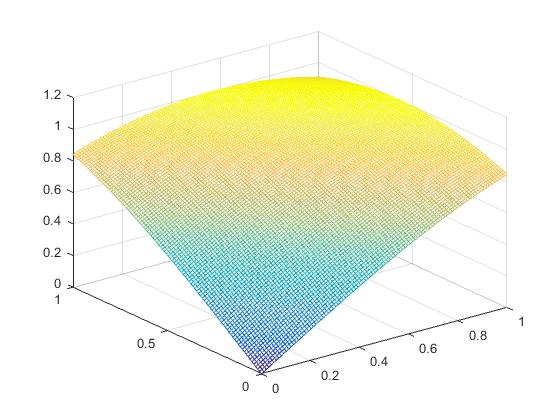}
	\caption{The analytical solution of the pressure $p$.}
	\label{figure_p4_add1}
\end{figure}

Figure \ref{figure_p4} shows the numerical solution of pressure $p_{h}^{n+1}$ at $T=1$, and Figure \ref{figure_p4_add1}  shows the analytical solution of pressure $p$. From the above figures, we can find that our method is stable and there is no ``locking phenomenon".


\medskip
{\bf Test 2.}  Let $\Omega= [0,1]\times[0,1]$, the definition of $\Gamma_{1}, \Gamma_{2}, \Gamma_{3}, \Gamma_{4}$ are same as {\bf Test 1}, take $\Delta t=1e-6$,  $T=1$.  Consider the problem \reff{e2.4}-\reff{e2.6} with the following source functions:
\begin{equation*}
  \sigma(\mathbf{u})=(4-2e^{-dev(\varepsilon(\mathbf{u}))})\varepsilon(\mathbf{u})+(1+e^{-dev(\varepsilon(\mathbf{u}))}+\lambda)\div\mathbf{u}I,
\end{equation*}
\begin{equation*}
  \mathcal{N}(\varepsilon(\mathbf{u}))=(4-2e^{-dev(\varepsilon(\mathbf{u}))})\varepsilon(\mathbf{u})+(1+e^{-dev(\varepsilon(\mathbf{u}))})\div\mathbf{u}I,
\end{equation*}
\begin{equation*}
  -dev(\varepsilon(\mathbf{u}))=tr(\varepsilon^{2}(\mathbf{u}))-\frac{1}{d}tr^{2}(\varepsilon(\mathbf{u})),
\end{equation*}
\begin{equation*}
  \phi=-\frac{c_{0}}{\pi}\sin(\pi x+\pi y)\\
  +2\alpha\pi t\sin(\pi x+\pi y)-\frac{2K\pi t}{\mu_{f}}\sin(\pi x+\pi y),
\end{equation*}
The analytical solution of the above problem is
\begin{eqnarray*}
        \mathbf{u} &=& t^{2}(\sin(\pi x)\sin(\pi y), \sin(\pi x)\sin(\pi y)), \\
        p &=& -\pi^{-1} t(\sin(\pi x)\cos(\pi y)+\cos(\pi x)\sin(\pi y)).
      \end{eqnarray*}
The parameters are chosen as follows: $\lambda=576.923$, $\mu=384.615$, $c_{0}= 0.5$, $\alpha=0.8$, $\nu=0.3$, $ K =0.5$ and $E=1000$.
\begin{table}[!htbp]
\begin{center}
	\caption{Spatial errors of $\mathbf{u}$ and $p$} \label{tab102}
\begin{tabular}{l c c c c }
\hline
$h$ & $\|\mathbf{u}-\mathbf{u}_h\|_{L^2(L^2)}$  &$ \|\mathbf{u}-\mathbf{u}_h\|_{L^2(H^1)}$ & $\|p-p_h\|_{L^2(L^2)}$ & $\|p-p_h\|_{L^2(H^1)}$\\ \hline
$h=0.176$ & 1.99711e-13  & 6.27422e-13   & 4.8716e-9       & 2.23502e-8      \\ 
$h=0.088$ & 1.99716e-13  & 6.27426e-13   & 5.01447e-9      &2.23502e-8       \\ 
$h=0.044$ & 1.99716e-13  & 6.27427e-13   & 5.05085e-9      &2.24583e-8        \\ 
$h=0.022$ & 1.99716e-13  & 6.27427e-13   & 5.05998e-9     &2.24922e-8        \\ \hline
\end{tabular}
\end{center}
\end{table}


Table \ref{tab102} displays the error of displacement $\mathbf{u}$ and the pressure $p$ with $L^2(0,T;$ $L^2(\Omega))$-norm and $L^2(0,T;H^1(\Omega))$-norm, which are consistent with the theoretical result.
\begin{figure}[!htbp]
	\centering
	\includegraphics[height=2.0in,width=2.8in]{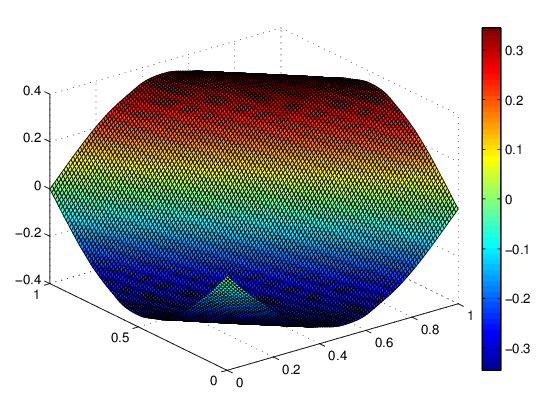}
	\caption{ The analytical solution of pressure $p$ at $T$.}\label{figure_p8}
\end{figure}
\begin{figure}[!htbp]
\centering
\includegraphics[height=2.2in,width=3in]{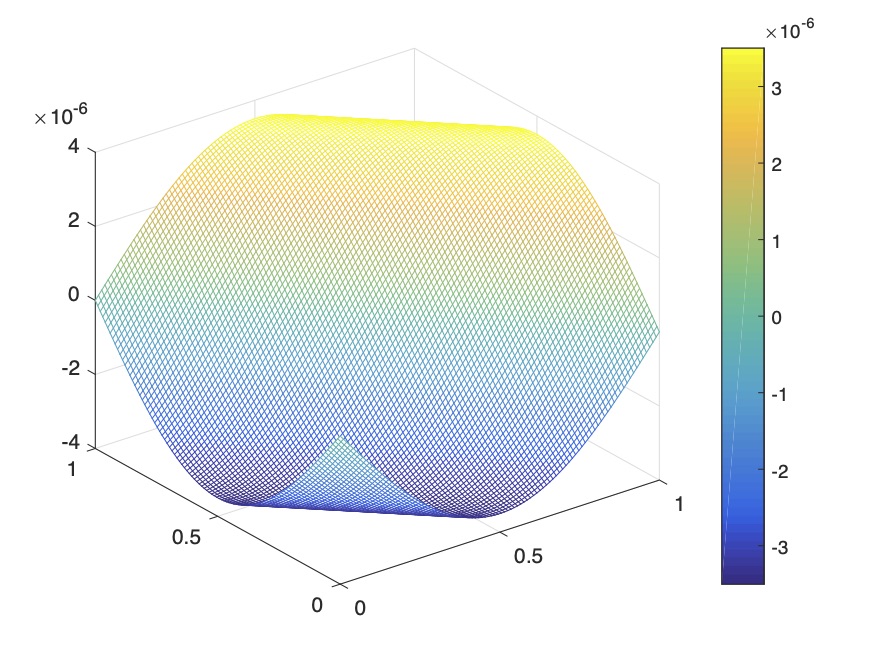}
\caption{The numerical pressure $p_{h}^{n+1}$ at the terminal time $T$.}\label{figure_p6}
\end{figure}

\begin{figure}[!htbp]
\centering
\includegraphics[height=2.0in,width=2.8in]{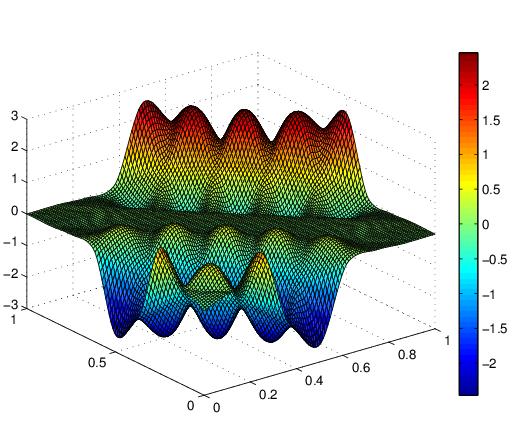}
\caption{The pressure $p_{h}^{n+1}$ of the original problem obtained by the fully discrete algorithm.}\label{figure_p7}
\end{figure}

Figure \ref{figure_p8} shows the analytical solution of pressure $p$ at $T=1$, Figure \ref{figure_p6} shows the numerical solution of pressure $p_{h}^{n+1}$ at $T=1$, which shows that there is no ``locking" phenomenon and approximates to the true solution of pressure $p$ very well.  Figure \ref{figure_p7} shows the pressure $p_{h}^{n+1}$ solved by finite element method without multiphysics reformulation for of the original problem, which shows that there is numerical oscillation for the pressure $p$.

\section{Conclusion}\label{sec-5}

In this paper, we reformulate the original problem to a new coupled fluid-fluid system by using variable substitution, that is, a generalized nonlinear Stokes problem of displacement vector field related to pseudo pressure and a diffusion problem of other pseudo pressure fields. And we analyze the existence and uniqueness of the reformulated problem. A fully discrete nonlinear finite element method is proposed to solve the model numerically, that is, a multiphysics finite element method is used for the space discretization and the backward Euler method is used as the time-stepping method, the Newton method is used to solve the generalized nonlinear Stokes problem. Then, Stability analysis and the optimal order error estimates are given. Numerical tests are given to demonstrate the theoretical results, which shows that our method has no numerical oscillation. To the best of our knowledge, it is the first time to propose a fully discrete multiphysics finite element method for the nonlinear poroelasticity model with nonlinear stress-strain relations and give the optimal error estimates.


\end{document}